\theoremstyle{plain}
\newtheorem*{I(n)}{$\I(n)$}
\newtheorem*{II(n)}{$\II(n)$}
\newtheorem*{III(2k+1)}{$\III(2k+1)$}
\newtheorem*{III'(2k+1)}{$\III'(2k+1)$}
\newtheorem*{V(n)}{$\V(n)$}
\newtheorem*{EFI(n)}{$\EFI(n)$}
\newtheorem*{TR(n)}{$\TR(n)$}
\newtheorem*{EFTR(n)}{$\EFTR(n)$}
\newtheorem*{EFII(n)}{$\EFII(n)$}
\newtheorem*{EFV(n)}{$\EFV(n)$}
\newtheorem{EConjecture}[subsection]{The Euler Characteristic Conjecture}
\newtheorem{SConjecture}[subsection]{Singer's Conjecture}
\newtheorem{subSCConjecture}[subsection]{Singer's Conjecture for Coxeter groups}
\newtheorem{Theorem}[subsubsection]{Theorem} 
\newtheorem{Thm}[subsection]{Theorem}
\newtheorem{Prop}[subsection]{Proposition}
\newtheorem{Proposition}[subsubsection]{Proposition}
\newtheorem{Cor}[subsubsection]{Corollary}
\newtheorem{Lemma}[subsection]{Lemma}
\newtheorem{Lem}[subsubsection]{Lemma}
\theoremstyle{definition}
\newtheorem{Rem}[subsubsection]{Remark}
\newtheorem{Ex}[subsubsection]{Example}
\DeclareMathOperator{\EFI}{\bf EFI}
\DeclareMathOperator{\I}{\bf I}
\DeclareMathOperator{\II}{\bf II}
\DeclareMathOperator{\V}{\bf V}
\DeclareMathOperator{\TR}{\bf TR}
\DeclareMathOperator{\EFTR}{\bf EFTR}
\DeclareMathOperator{\EFII}{\bf EFII}
\DeclareMathOperator{\EFV}{\bf EFV}
\newcommand{\Card}{\operatorname{Card}}
\newcommand{\wh}{\widehat}
\newcommand{\cs}{\mathcal{S}}
\newcommand{\cf}{\mathcal{F}}
\newcommand{\cH}{\mathcal{H}}
\def\l{\operatorname{\ell}}
\newcommand{\ltwo}{\l^2}
\newcommand{\gS}{\Sigma}
\newcommand{\gO}{\Omega}
\newcommand{\gs}{\sigma}
\newcommand{\BS}{\mathbb{S}}
\newcommand{\BZ}{\mathbb{Z}}
\newcommand{\BN}{\mathbb{N}}
\numberwithin{equation}{section}
\begin{document}

\title{Statements and Dilemmas Regarding the $\ell^2$-homology of Coxeter groups}
\author{Timothy A. Schroeder\\
Murray State University}
\date{September 14, 2012}

\maketitle

\begin{abstract}    
We generalize the methods used in \cite{schroedereven} to provide a program for proving Singer's Conjecture for Coxeter systems.  Specifically, we consider \emph{even} Coxeter systems with nerves that are flag triangulations of $\BS^{n-1}$, $n=2k$.  We prove that Conjecture \ref{conj:singerc} in dimensions $n-2$ and $n-1$, along with the vanishing of the $\ltwo$-homology of certain subspaces called ``two-letter'' ruins above dimension $k+1$, imply Conjecture \ref{conj:singerc} in dimension $n$.  This is but a program.  The author intends this paper to serve as a reference for those inquiring about Singer's Conjecture and about even Coxeter systems.  Users of this paper should focus attention on Sections \ref{ss:evenodd} and \ref{ss:evenconj}, along with Remark \ref{r:eftr}.

\end{abstract}


\section{Introduction}\label{s:intro} 
The following conjecture is attributed to Singer.
\begin{SConjecture}\label{conj:singer} If $M^{n}$ is a closed aspherical manifold, then the reduced $\ltwo$-homology of $\widetilde{M}^{n}$, $\cH_{\ast}(\widetilde{M}^n)$, vanishes for all $\ast\neq\frac{n}{2}$.
\end{SConjecture}
For details on $\ltwo$-homology theory, see \cite{davismoussong}, \cite{do2} and \cite{eckmann}.  Now, let $X$ be a geometric $G$-complex.  A key feature of the $\ell^2$-theory is that it is possible to attach to the Hilbert space $\cH_i(X)$ a nonnegative real number, called the $i^{\text{th}}$ $\ltwo$-Betti number.  A formula of Atiyah states that the alternating sum of these $\ltwo$-Betti numbers is the orbihedral Euler characteristic $\chi^{\text{orb}}(X/G)$, or in the case of a free action, the ordinary Euler characteristic $\chi(X/G)$.  Thus, Conjecture \ref{conj:singer} implies the following conjecture regarding Euler characteristic (attributed to H.Hopf):
\begin{EConjecture} If $M^{2k}$ is a closed, aspherical manifold of dimension $2k$, then its Euler characteristic, $\chi(M^{2k})$, satisfies
\[(-1)^k\chi(M^{2k})\geq 0.\]
\end{EConjecture}

Singer's conjecture holds for elementary reasons in dimensions $\leq 2$.  Indeed, top-dimensional cycles on manifolds are constant on each component, so a square-summable cycle on an infinite component is constant $0$.  As a result, Conjecture \ref{conj:singer} in dimension $\leq 2$ follows from Poincar\'e duality.  In \cite{LL}, Lott and L\"uck prove that it holds for those aspherical $3$-manifolds for which Thurston's Geometrization Conjecture is true.  (Hence, by Perelman, all aspherical $3$-manifolds.)  

Let $S$ be a finite set of generators.  A \emph{Coxeter matrix} on $S$ is a symmetric $S\times S$ matrix $M=(m_{st})$ with entries in $\BN\cup\{\infty\}$ such that each diagonal entry is $1$ and each off diagonal entry is $\geq 2$.  The matrix $M$ gives a presentation for an associated \emph{Coxeter} group $W$:
\begin{equation}\label{e:coxetergroup}
	W=\left\langle S\mid (st)^{m_{st}}=1, \text{ for each pair } (s,t) \text{ with } m_{st}\neq\infty\right\rangle.
\end{equation}
The pair $(W,S)$ is called a \emph{Coxeter system}.  Denote by $L$ the nerve of $(W,S)$.  In several papers (e.g., \cite{davisannals}, \cite{davisbook}, and \cite{davismoussong}), M. Davis describes a construction which associates to any Coxeter system $(W,S)$, a simplicial complex $\gS(W,S)$, or simply $\gS$ when the Coxeter system is clear, on which $W$ acts properly and cocompactly.  The two salient features of $\gS$ are that $(1)$ it is contractible and $(2)$ it permits a cellulation under which the link of each vertex is $L$.  It follows that if $L$ is a triangulation of $\BS^{n-1}$, $\gS$ is an $n$-manifold.  There is a special case of Singer's conjecture for such manifolds.  
 
\begin{subSCConjecture}\label{conj:singerc} Let $(W,S)$ be a Coxeter system such that its nerve, $L$, is a triangulation of $\BS^{n-1}$.  Then 
\[\cH_{i}(\gS(W,S))=0 \text{ for all } i\neq\frac{n}{2}.\]
\end{subSCConjecture}

In \cite{do2}, Davis and Okun prove that if Conjecture \ref{conj:singerc} for \emph{right-angled} Coxeter systems is true in some odd dimension $n$, then it is also true for right-angled systems in dimension $n+1$.  (A Coxeter system is right-angled if generators either commute or have no relation.)  They also show that Thurston's Geometrization Conjecture holds for these Davis $3$-manifolds arising from right-angled Coxeter systems.  Hence, the Lott and L\"uck result implies that Conjecture \ref{conj:singerc} for right-angled Coxeter systems is true for $n=3$ and, therefore, also for $n=4$.  (Davis and Okun also show that Andreev's theorem, \cite[Theorem 2]{andreev2}, implies Conjecture \ref{conj:singerc} in dimension $3$ for right-angled systems.)  In \cite{schroedergeom}, the author geometrizes arbitrary $3$-dimensional Davis manifolds and shows that Conjecture \ref{conj:singerc} in dimension $3$ follows. 

Right-angled Coxeter systems are specific examples of \emph{even} Coxeter systems.  We say a Coxeter system is even if for any two generators $s\neq t$, $m_{st}$ is either even or infinite.  In \cite{schroedereven}, the author proves the following extension of the Davis-Okun $4$-dimensional result:

\begin{Thm}\label{t:schroedereven} Let $(W,S)$ be an even Coxeter system whose nerve $L$ is a flag triangulation of $\BS^3$.  Then $\cH_{i}(\gS(W,S))=0$ for $i\neq 2$.
\end{Thm}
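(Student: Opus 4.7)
The plan is to follow the inductive decomposition strategy introduced by Davis and Okun in the right-angled setting and adapt it to even Coxeter systems. Because $L$ triangulates $\BS^{3}$, the complex $\gS$ is a $4$-manifold, so Poincar\'e duality for $\ltwo$-homology gives $\cH_i(\gS)\cong\cH_{4-i}(\gS)$. Since $W$ is infinite, $\cH_0(\gS)=0$ is automatic, so by duality the whole theorem reduces to the single assertion $\cH_1(\gS)=0$.

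To prove $\cH_1(\gS)=0$, I would assemble $\gS$ out of subcomplexes indexed by subsets $T\subseteq S$, each built from the parabolic subsystem $(W_T,T)$, and run a Mayer--Vietoris spectral sequence relating $\cH_*(\gS)$ to the $\ltwo$-homology of these pieces and their intersections. The essential inputs are (a) Singer's Conjecture for Coxeter systems in dimensions $2$ and $3$, which controls the pieces whose nerves are spheres of those dimensions, and (b) vanishing of the $\ltwo$-homology, above a certain degree, of the auxiliary ``two-letter'' subcomplexes (ruins) built from pairs $\{s,t\}\subseteq S$ with $m_{st}$ finite. Input (a) is in hand: the two-dimensional case is elementary, and the three-dimensional case follows from the geometrization of $3$-dimensional Davis manifolds in \cite{schroedergeom}.

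The main obstacle is (b). For right-angled pairs ($m_{st}=2$) the two-letter subsystem is a direct product and the corresponding pieces are transparent, but for $m_{st}=2k>2$ the group $W_{\{s,t\}}$ acts on a $2k$-gonal tesselation of $\BR^{2}$ and the way the associated ruin sits in $\gS$ must be analyzed carefully. The natural tool here is weighted $\ltwo$-cohomology with parameter $\q$: I would carry out the vanishing computation for two-letter ruins in the weighted setting $\Ltwoq$, where evenness of every $m_{st}$ supplies the retractions needed to compare pieces, and then specialize to $\q=\bone$ using continuity of $\q$-weighted Betti numbers. Feeding these inputs into the Mayer--Vietoris machine collapses the spectral sequence in degrees $\leq 1$, yielding $\cH_1(\gS)=0$. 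Poincar\'e duality then closes the argument and gives $\cH_i(\gS)=0$ for all $i\neq 2$. The hardest part, in my view, will be step (b): the two-letter ruins are genuinely two-dimensional in character, yet their gluing data into the $4$-manifold $\gS$ interacts with every higher stratum via the flag condition on $L$, so the weighted estimates must be uniform enough to survive specialization at $\q=\bone$.
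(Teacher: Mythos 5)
Your overall skeleton matches the paper's program: reduce by Poincar\'e duality to vanishing away from the middle dimension, and feed in (a) Singer's Conjecture for Coxeter systems in dimensions $2$ and $3$ and (b) a vanishing statement for two-letter ruins. This is exactly Theorem \ref{t:newmain} with $k=2$. But two of your steps do not hold up as written.

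First, your treatment of input (b) is both unnecessary and, as proposed, invalid. For $n=4$ the required statement $\EFTR(4)$ only asks for vanishing in degrees $i>3$, i.e.\ only the top degree $i=4$, and this follows from the elementary relative-cycle argument of Proposition \ref{p:2ruintop}: a relative $4$-cycle must be constant along an infinite chain of adjacent $4$-cells (flagness forces $m_{rs}=\infty$ for the two cofaces of a codimension-one face), hence zero. No weighted theory is needed. More seriously, the step you do propose --- computing in $\Ltwoq$ and then ``specializing to $\q=\bone$ using continuity of $\q$-weighted Betti numbers'' --- is not a legitimate move: weighted $\ltwo$-Betti numbers are not known to be continuous in $\q$, and the passage from the computable ranges of $\q$ to $\q=\bone$ is precisely the open content of the weighted version of Singer's Conjecture. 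An argument resting on that specialization has a genuine gap.

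Second, you have omitted the step that actually carries the difficulty, namely deducing the one-letter ruin statement $\EFII(4)$ from $\I(2)$ and $\I(3)$. For a right-angled system $W_U=W_{U-t}\times W_t$ and the ruin $\gO(S,t)$ is essentially a product over its boundary, but for an even system with some $m_{st}>2$ this fails (Remark \ref{r:oneevencolor}), and one cannot simply ``run the Mayer--Vietoris machine.'' The paper's route is to paint the vertices of $\gO$ by cosets (the map $c:W_U\to A$), decompose $\gO$ into even and odd $c$-collars, and prove that even collars meet pairwise in disjoint unions of infinite $(n-2)$-manifolds $\gS(W',U_{st})$ (Lemma \ref{l:W'-orbit}, Proposition \ref{p:W'-Daviscpx}, Corollary \ref{c:multevens}) while an odd collar meets the union of the others in $(n-1)$-manifolds (Corollary \ref{c:oddn-1}); only then do $\I(2)$ and $\I(3)$ kill the intersection terms in the Mayer--Vietoris sequence of Proposition \ref{p:even1ruin}. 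This combinatorial control of the intersections --- which is where both the evenness and the flag hypothesis are actually used --- is absent from your proposal and cannot be replaced by the decomposition over subsets $T\subseteq S$ that you describe.
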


The purpose of this paper is to generalize the methods used in \cite{schroedereven} to any dimension.  Following that template, we look at specific subspaces $\gO$ of $\gS$ called \emph{ruins} (see \ref{ss:ruins}).  What follows is a similar, but much more complicated, statement to that proven by Davis and Okun in \cite{do2}.  For $n=2k$ we consider even Coxeter systems with flag nerves.  We prove that Conjecture \ref{conj:singerc} in dimensions $n-2$ and $n-1$, along with the vanishing of the $\ltwo$-homology of certain subspaces called ``two-letter'' ruins above dimension $k+1$, implies Conjecture \ref{conj:singerc} in dimension $n$.  

%

\section{The Davis Complex}\label{s:davis}
Let $(W,S)$ be a Coxeter system.  Denote by $\cs$ the poset of spherical subsets of $S$, partially ordered by inclusion; and let $\cs^{(k)}:=\{T\in\cs\mid\Card(T)=k\}$.  Given a subset $V$ of $S$, let $\cs_{<V}:=\{T\in \cs\mid T\subset V\}$.  Similar definitions exist for $>, \leq,\geq$.  For any $w\in W$ and $T\in \cs$, we call the coset $wW_{T}$ a \emph{spherical coset}.  The poset of all spherical cosets we will denote by $W\cs$.  

The poset $\cs_{>\emptyset}$ is an abstract simplicial complex, denote it by $L$, and call it the \emph{nerve} of $(W,S)$.  The vertex set of $L$ is $S$ and a non-empty subset of vertices $T$ spans a simplex of $L$ if and only if $T$ is spherical.  

Let $K=|\cs|$, the geometric realization of the poset $\cs$.  In $K$, simplices correspond to linearly ordered chains in the poset $\cs$.  It is the cone on the barycentric subdivision of $L$, the cone point corresponding to the empty set, and thus a finite simplicial complex.  Denote by $\gS(W,S)$, or simply $\gS$ when the system is clear, the geometric realization of the poset $W\cs$.  This is the Davis complex.  The natural action of $W$ on $W\cs$ induces a simplicial action of $W$ on $\gS$ which is proper and cocompact.  $K$ includes naturally into $\gS$ via the map induced by $T \rightarrow W_{T}$, $T\in\cs$.  So we view $K$ as a subcomplex of $\gS$ and note that it is a strict fundamental domain for the action of $W$ on $\gS$.  

For any element $w\in W$, write $wK$ for the $w$-translate of $K$ in $\gS$.  Let $w,w'\in W$ and consider $wK\cap w'K$.  This intersection is non-empty if and only if $V=S(w^{-1}w')$ is a spherical subset.  In fact, $wK\cap w'K$ is simplicially isomorphic to $|\cs_{\geq V}|$, the geometric realization of $\cs_{\geq V}:=\{V'\in\cs\mid V\subseteq V'\}$.

\paragraph{A cubical structure on $\gS$.}  For each $w\in W$, $T\in\cs$, denote by $w\cs_{\leq T}$ the subposet $\{wW_V\mid V\subseteq T\}$ of $W\cs$.  Put $n=\Card(T)$.  $|w\cs_{\leq T}|$ has the combinatorial structure of a subdivision of an $n$-cube.  We identify the sub-simplicial complex $|w\cs_{\leq T}|$ of $\gS$ with this coarser cubical structure and call it a \emph{cube of type $T$}.  Note that the vertices of these cubes correspond to spherical subsets $V\in\cs_{\leq T}$.   (For details on this cubical structure, see \cite{moussong}.)

\paragraph{A cellulation of $\gS$ by Coxeter cells.}  $\gS$ has a coarser cell structure: its cellulation by ``Coxeter cells.''  (References for this cellulation include \cite{davisbook} and \cite{do2}.)  The features of the Coxeter cellulation are summarized by the following from \cite{davisbook}.  

\begin{Prop}\label{p:coxeter} There is a natural cell structure on $\gS$ so that 
\begin{itemize}
\item its vertex set is $W$, its 1-skeleton is the Cayley graph of $(W,S)$ and its 2-skeleton is a Cayley 2-complex.
\item each cell is a Coxeter cell.
\item the link of each vertex is isomorphic to $L$ (the nerve of $(W,S)$) and so if $L$ is a triangulation of $\BS^{n-1}$, $\gS$ is a topological $n$-manifold.
\item a subset of $W$ is the vertex set of a cell if and only if it is a spherical coset and
\item the poset of cells is $W\cs$.
\end{itemize}
\end{Prop}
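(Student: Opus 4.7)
The plan is to start from the simplicial structure on $\gS$ already defined as the geometric realization $|W\cs|$, and coarsen it by grouping simplices into ``Coxeter cells.'' For each spherical coset $\gs = wW_T \in W\cs$, I would define a cell $C(\gs)$ to be the geometric realization of the subposet $W\cs_{\leq\gs}:=\{w'W_{T'}\mid w'W_{T'}\subseteq wW_T\}$. This is the same set on which the cubical structure of type $T$ was defined (i.e., $|w\cs_{\leq T}|$), but now viewed as a single cell rather than subdivided into cubes or simplices. The first task is to identify $C(\gs)$ with a Coxeter cell: since $W_T$ is finite and acts on a Euclidean space $\BR^T$ via its canonical linear (Tits) representation, the convex hull of a generic $W_T$-orbit is a simple convex polytope whose face poset is canonically isomorphic to $\cs_{\leq T}$. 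An explicit $W_T$-equivariant homeomorphism from this convex hull to $|\cs_{\leq T}|$ (arising from Moussong's metric) then transports to a homeomorphism $C(\gs)\cong$ (a Coxeter cell of type $T$).

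Next I would verify the five bullet points in order. Taking $T=\emptyset$, the cells $C(wW_\emptyset)=\{w\}$ recover the vertex set $W$. Taking $T=\{s\}$, the cells $C(wW_{\{s\}})$ are $1$-cells with endpoints $w$ and $ws$, which is exactly the Cayley graph of $(W,S)$. For $T=\{s,t\}$ with $m_{st}<\infty$, the group $W_T$ is the dihedral group $D_{m_{st}}$, and $C(wW_T)$ is the convex hull of a $W_T$-orbit, i.e., a $2m_{st}$-gon filling in the $2m_{st}$-cycle of Cayley edges, so the $2$-skeleton is indeed a Cayley $2$-complex. The statement about vertex sets of cells is immediate from the construction: the $0$-cells inside $C(wW_T)$ correspond to the subcosets $w'W_\emptyset$ contained in $wW_T$, which form the set $wW_T$ itself. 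Finally, the poset of cells is $W\cs$ by construction, with $C(\gs)\subseteq C(\gs')$ iff $\gs\subseteq\gs'$.

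The remaining and slightly more delicate point is the computation of the link of a vertex $w$. I would note that the cells of $\gS$ containing $w$ are precisely those $C(w'W_T)$ with $w\in w'W_T$, and such a coset may be uniquely represented as $wW_T$ for some $T\in\cs$. The link of $w$ in a single Coxeter cell of type $T$ is an $(|T|-1)$-simplex (because the Coxeter cell is simple at each vertex, its vertex figure being the simplex spanned by the edges to the neighbors $ws$, $s\in T$). These vertex-figure simplices glue along faces corresponding to inclusions $T'\subseteq T$, yielding a simplicial complex with one $(k-1)$-simplex for each spherical $T$ with $\Card(T)=k$ — which is exactly the nerve $L=\cs_{>\emptyset}$. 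The consequence that $\gS$ is an $n$-manifold when $L\cong\BS^{n-1}$ follows by standard link criteria.

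I expect the main obstacle to be the identification of $C(\gs)$ with an honest convex Coxeter cell (the first paragraph); everything else is combinatorial bookkeeping on the poset $W\cs$. In practice this identification is carried out in \cite{davisbook} using Moussong's piecewise-Euclidean metric, and one simply needs to check that the resulting cell-attaching maps are compatible across cosets — a compatibility that is automatic because the posets $\cs_{\leq T}$ are intrinsic to the abstract Coxeter data and are carried equivariantly to one another by the $W$-action on $W\cs$.
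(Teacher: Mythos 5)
The paper does not prove this proposition at all: it is quoted verbatim from \cite{davisbook} (see the sentence immediately preceding it, ``The features of the Coxeter cellulation are summarized by the following from \cite{davisbook}''), so there is no in-paper argument to compare against. Your outline is the standard construction from Davis's book: define the cell over a spherical coset $\gs=wW_T$ as the realization of the poset of subcosets, identify it with the convex hull of a generic $W_T$-orbit in the canonical representation, and then read off the five bullet points; the link computation via simplicity of Coxeter polytopes and the assembly of the vertex figures into $L=|\cs_{>\emptyset}|$ is exactly how Davis does it. In that sense your route is the expected one and the combinatorial verifications you list are correct.

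There is, however, one concrete error in your first paragraph: you identify $C(\gs)$ with $|w\cs_{\leq T}|$ and assert that the face poset of the Coxeter polytope of type $T$ is $\cs_{\leq T}$. Neither is right. The face poset of the convex hull of a generic $W_T$-orbit is the poset of spherical cosets $W_T\cs_{\leq T}=\{vW_{T'}\mid v\in W_T,\ T'\subseteq T\}$ (for $T=\{s,t\}$ with $m_{st}=m$ this is a $2m$-gon with $2m$ vertices and $2m$ edges, whereas $\cs_{\leq T}$ is the face poset of a square), and $|w\cs_{\leq T}|$ is only the cube of type $T$, which is a strict fundamental domain for the $W_T$-action on the Coxeter cell, not the cell itself: the cell $C(wW_T)=|\{w'W_{T'}\mid w'W_{T'}\subseteq wW_T\}|$ is the union of the $|W_T|$ translated cubes $|wv\cs_{\leq T}|$, $v\in W_T$. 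Your original definition of $C(\gs)$ is the correct one; the parenthetical identification with the single cube would, if taken literally, produce the wrong polytope and break the later claims (e.g., the $1$-cell of type $\{s\}$ at $w$ is the union of the two half-edges $|w\cs_{\leq\{s\}}|$ and $|ws\cs_{\leq\{s\}}|$). With that correction the rest of your argument goes through as in \cite{davisbook}; the only remaining point deserving care is that ``$L\cong\BS^{n-1}$ implies $\gS$ is a topological manifold'' is not a formal consequence of vertex links alone and requires the standard polyhedral-manifold recognition results cited there.
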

We will write $\gS_{cc}$, when necessary, to denote the Davis complex equipped with this cellulation by Coxeter cells.  Under this cellulation, the vertices of $\gS_{cc}$ correspond to cosets of $W_{\emptyset}$, i.e. to elements from $W$; and $1$-cells correspond to cosets of $W_{s}$, $s\in S$.  It will be our convention to use the term ``vertices'' for vertices in the cellulation of $\gS$ by Coxeter cells or for vertices in $L$ and to use ``$0$-simplices'' for $0$-simplices in $K$ or translates of $K$.
\vskip.2cm

\subsection{Ruins}\label{ss:ruins}
The following subspaces are defined in \cite{ddjo}.  Let $(W,S)$ be a Coxeter system.  For any $U\subseteq S$, let $\cs(U)=\{T\in\cs|T\subseteq U\}$ and let $\gS(U)$ be the subcomplex of $\gS_{cc}$ consisting of all cells of type $T$, with $T\in\cs(U)$. 

Given $T\in \cs(U)$, define three subcomplexes of $\gS(U)$:
\begin{align}
\gO (U,T):\quad & \text{the union of closed cells of type $T'$, with $T'\in 
\cs(U)_{\geq T}$,}\notag\\
\wh{\gO}(U,T):\quad & \text{the union of closed cells of type $T''$}, 
T''\in \cs(U), T''\notin \cs (U)_{\geq T},\notag\\
\partial\gO (U,T):\quad & \text{the cells of $\gO (U,T)$ of type $T''$,
with } T''\notin \cs (U)_{\geq T}.\notag
\end{align}
The pair $(\gO (U,T),\partial\gO (U,T))$ is called the  \emph{$(U,T)$-ruin}.
For $T=\emptyset$, we have $\gO (U,\emptyset)=\gS (U)$ and
$\partial\gO (U,\emptyset)=\emptyset$.

\paragraph{One-Letter Ruins.}\label{sss:1letter}  Let $t\in S$.  We call the $(S,t)$-ruin a \emph{one-letter ruin}.  Put $U:=\{s\in S\mid m_{st}<\infty\}$, i.e. $U$ is the vertex set of the star of $t$ in $L$.  $1$-cells in $\gO(S,t)$ are of type $u$ where $u\in U$.  So two vertices $w,v$ in a component of $\gO(S,t)$, thought of as group elements of $W$, have the property that $v=wp$, where $p\in W_{U}$.  Thus, the path components of $\gO(S,t)$ are indexed by the cosets $W/W_{U}$.  Denote by $\gO$ the path-component of $\gO(S,t)$ with vertex set corresponding $W_{U}$.  The action of $W_{U}$ on $\gS$ restricts to an action on $\gO$.  Put $K(U):=K\cap\gO$ and note that the $W_{U}$-translates of $K(U)$ cover $\gO$, i.e. $\gO=\bigcup_{w\in W_{U}}wK(U)$.  Let $\partial\gO:=\gO\cap \partial\gO(S,t)$.  Coxeter $1$-cells of $\partial\gO(S,t)$ are of type $u$ where $u\in U-t$; so the path components of $\partial\gO$ are indexed by the cosets $W_U/W_{U-t}$.  

\paragraph{Boundary collars.}\label{sss:boundarycollars}
If we restrict our attention to cubes of type $T$, where $T\subseteq T'$ for some $T'\in\cs_{\geq t}$, $\gO$ is a cubical complex and $\partial\gO$ is a subcomplex.  Moreover, if $B$ is a component of $\partial\gO$, the space $D:=B\times\left[0,1\right]$ is isomorphic to the union of the $w$-translates of $K(U)$ where $w$ is a vertex of $B$.  We call such subspaces \emph{boundary collars}.  It is clear that the collection of boundary collars covers $\gO$.  We denote by $\partial_{in}(D)$ the end of this product which does not lie in $\partial\gO$; the $0$-simplices of $\partial_{in}(D)$ correspond to elements of $\cs_{\geq t}$.  The boundary collars intersect along subsets of these ``inner'' boundaries.  

\paragraph{Two-Letter Ruins.}\label{sss:2letter} For $U\subseteq S$ and $T\in\cs(U)$ with $\Card(T)=2$, we call the $(U,T)$-ruin a ``two-letter'' ruin.

\section{Variations on Singer's Conjecture}\label{s:singervar}
In \cite[Section 8]{do2}, Davis and Okun present several variations of Singer's Conjecture for Coxeter groups (Conjecture \ref{conj:singerc}) in the case $W$ is a right-angled Coxeter system.  They then prove several implications regarding these statements including their proof that Conjecture \ref{conj:singerc} in dimension $2k-1$ implies \ref{conj:singerc} in dimension $2k$.  We procede similarly, beginning with a restatement of Singer's Conjecture.  The Roman numeral notation is to model that used in \cite{do2} and \cite{ddjo}.

\begin{I(n)}\label{conj:singc} Suppose $(W,S)$ is a Coxeter system such that its nerve, $L$, is a triangulation of $\BS^{n-1}$.  Then 
\[\cH_i(\gS(W,S))=0 \text{ for } i\neq\frac{n}{2}.\]
\end{I(n)}

\paragraph{$\I(1)$ and $\I(2)$ are true.}  
Indeed, top-dimensional cycles on manifolds are constant on each component, so a square-summable cycle on an infinite component is constant $0$.  As a result, Conjecture \ref{conj:singer} in dimension $\leq 2$ follows from Poincar\'e duality.  

\paragraph{$\I(3)$ is true.}
In \cite{do2}, the authors show that $\I(3)$ is true for right-angled Coxeter groups.  In \cite{schroedergeom}, the author geometrizes arbitrary $3$-dimensional Davis manifolds and shows that $\I(3)$ follows, Corollary 4.4, \cite{schroedergeom}.

\subsection{Singer's Conjecture for Ruins}\label{ss:scruins}
What follows are variations of $\I(n)$ for one-letter ruins, as definined in section \ref{ss:ruins}.
\begin{II(n)}\label{conj:1ruin} Let $(W,S)$ be a Coxeter system whose nerve $L$ is a triangulation of $\BS^{n-1}$ and let $t\in S$.  Then $\cH_{i}(\gO(S,t),\partial\gO(S,t))=0$ for $i>\frac{n}{2}$.
\end{II(n)}

\begin{V(n)}\label{conj:1ruingen} Let $(W,S)$ be a Coxeter system whose nerve $L$ is a triangulation of $\BS^{n-1}$.  Let $V\subseteq S$ and $t\in V$.  Then $\cH_{i}(\gO(V,t),\partial\gO(V,t))=0$ for $i>\frac{n}{2}$.
\end{V(n)}

\begin{Proposition}\label{p:one-letterexact} $\II(n)$ implies that $\cH_i(\partial\gO(S,t))=\cH_i(\gO(S,t))$ for $i>\frac{n}{2}$. 
\end{Proposition}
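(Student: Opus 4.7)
The plan is to invoke the long exact sequence of the pair $(\gO(S,t),\partial\gO(S,t))$ in reduced $\ltwo$-homology. Writing $\gO=\gO(S,t)$ and $\partial\gO=\partial\gO(S,t)$ for brevity, this (weakly) exact sequence reads
\begin{equation*}
\cdots\to\cH_{i+1}(\gO,\partial\gO)\xrightarrow{\delta}\cH_i(\partial\gO)\xrightarrow{j_*}\cH_i(\gO)\xrightarrow{q_*}\cH_i(\gO,\partial\gO)\to\cdots,
\end{equation*}
where $j_*$ is induced by the inclusion $\partial\gO\hookrightarrow\gO$.

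First I would observe that for any $i>\frac{n}{2}$, we also have $i+1>\frac{n}{2}$, so the hypothesis $\II(n)$ gives vanishing of both adjacent relative terms: $\cH_{i+1}(\gO,\partial\gO)=0$ and $\cH_i(\gO,\partial\gO)=0$. Then weak exactness at $\cH_i(\partial\gO)$ yields $\ker j_*=\overline{\Ima\delta}=0$, so $j_*$ is injective. Weak exactness at $\cH_i(\gO)$ gives $\overline{\Ima j_*}=\ker q_*=\cH_i(\gO)$, so $j_*$ has dense image. Hence $j_*:\cH_i(\partial\gO)\to\cH_i(\gO)$ is a weak isomorphism of Hilbert $\cn W$-modules, which is precisely the asserted equality $\cH_i(\partial\gO)=\cH_i(\gO)$ in this category.

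The only genuine subtlety is the standard one in $\ltwo$-theory: the long exact sequence in reduced $\ltwo$-homology is exact only up to closures, so one cannot in general conclude that an injection with dense image is an isometric isomorphism. Here, however, we are asserting the identification at the level of reduced $\ltwo$-homology (equivalently, of Hilbert $\cn W$-modules up to weak isomorphism), and the vanishing of the two neighboring terms is strong enough that weak exactness yields both the trivial-kernel and dense-image conclusions with no further work. Thus no further obstacle arises, and the proposition is a direct consequence of $\II(n)$ via the pair sequence.
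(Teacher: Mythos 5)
Your argument is correct and is essentially the paper's own proof: both invoke the long exact sequence of the pair $(\gO(S,t),\partial\gO(S,t))$ and use the vanishing of the relative terms guaranteed by $\II(n)$ to conclude via exactness. Your additional care about weak exactness in reduced $\ltwo$-homology (checking both neighboring relative terms to get trivial kernel and dense image) is a welcome refinement of the same approach.
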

\begin{proof} Consider the long exact sequence of the pair $(\gO,\partial\gO)=(\gO(S,t),\partial\gO(S,t)$:

\[
	\ldots\to \cH_{\ast}(\partial\gO)\to
	\cH_{\ast}(\gO)\to
  \cH_{\ast}(\gO,\partial\gO)\to\ldots  
\]

$\II(n)$ implies the third term vanishes, for $i>\frac{n}{2}$.  The result follows from exactness.
\end{proof}

The same proof applied to the long exact sequence of the pair $(\gO(V,t),\partial\gO(V,t))$	proves the following.
\begin{Prop}\label{p:one-letterV} $\V(n)$ implies that $\cH_i(\partial\gO(V,t))=\cH_i(\gO(V,t))$ for $i>\frac{n}{2}$.
\end{Prop}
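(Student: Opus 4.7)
The plan is to adapt the argument of Proposition \ref{p:one-letterexact} essentially verbatim, simply replacing the pair $(\gO(S,t),\partial\gO(S,t))$ by the pair $(\gO(V,t),\partial\gO(V,t))$. First I would write down the long exact sequence of reduced $\ltwo$-homology for this pair:
\[
\ldots\to\cH_{i+1}(\gO(V,t),\partial\gO(V,t))\to\cH_{i}(\partial\gO(V,t))\to\cH_{i}(\gO(V,t))\to\cH_{i}(\gO(V,t),\partial\gO(V,t))\to\ldots
\]
Then I would invoke $\V(n)$, which asserts that $\cH_{j}(\gO(V,t),\partial\gO(V,t))=0$ for every $j>\tfrac{n}{2}$. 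For a fixed $i>\tfrac{n}{2}$, both flanking relative terms (indices $i$ and $i+1$) exceed $\tfrac{n}{2}$ and therefore vanish, so exactness forces the connecting map $\cH_{i}(\partial\gO(V,t))\to\cH_{i}(\gO(V,t))$ to be simultaneously injective and surjective, hence an isomorphism.

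There is essentially no obstacle here: the entire content is that the one index shift $i\mapsto i+1$ still keeps us above $\tfrac{n}{2}$, so two consecutive relative terms vanish at once. The only subtlety worth noting is that one is working with \emph{reduced} $\ltwo$-homology, where exactness requires that the images of the boundary maps be closed; this is the standard setup for equivariant pairs of the type considered in \cite{davismoussong}, \cite{do2}, and \cite{ddjo}, and it is already used implicitly in the preceding proposition, so nothing new is needed.
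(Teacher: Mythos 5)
Your proof is correct and is exactly the paper's argument: the paper simply notes that the proof of Proposition \ref{p:one-letterexact} applies verbatim to the long exact sequence of the pair $(\gO(V,t),\partial\gO(V,t))$, and you have spelled out the same exactness argument (indeed more carefully, by observing that both relative terms at indices $i$ and $i+1$ vanish). One small terminological slip: the map $\cH_{i}(\partial\gO(V,t))\to\cH_{i}(\gO(V,t))$ is induced by inclusion, not the connecting homomorphism, but this does not affect the argument.
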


\paragraph{Singer's Conjecture for two-letter ruins.}  The following statement about two-letter ruins is needed for our program. 

\begin{TR(n)} Let $(W,S)$ be a Coxeter system with nerve $L$ a triangulation of $\BS^{n-1}$.  Let $V\subseteq S$ and let $T\subseteq V$ be a spherical subset with $\Card(T)=2$.  Then $\cH_i(\gO(V,T),\partial\gO(V,T))=0$ for $i>\frac{n}{2}+1$.
\end{TR(n)}

\subsection{Implications}
\paragraph{Excision Isomorphisms.}  Now let $V\subseteq S$, be arbitrary; $T\subseteq V$ spherical, $\gO:=\gO(V,T)$, $\partial\gO:=\partial\gO(V,T)$.  Recall that $\gS(V)$ is the subcomplex of $\gS_{cc}$ consisting of cells of type $T'$, with $T'\subseteq V$.  We have excision isomorphisms (as in \cite{ddjo}):
\begin{equation}\label{e:excision1}
	C_{\ast}(\gO(V,T),\partial\gO)\cong C_{\ast}(\gS(V),\wh{\gO}(V,T)),
\end{equation}
and for any $s\in T$ and $T':=T-s$, 
\begin{equation}\label{e:excision2}
	C_{\ast}(\gS (V-s),\wh{\gO}(V-s,T'))\cong
	C_{\ast}(\wh{\gO}(V,T),\wh{\gO}(V,T')).
\end{equation}
Set $\wh{\gO}:=\wh{\gO}(V,T)$, and $\wh{\gO}':=\wh{\gO}(V,T')$.  Consider the long, weakly exact sequence of the triple $(\gS(V),\wh{\gO},\wh{\gO}')$:
\[
	\ldots\to \cH_{\ast}(\wh{\gO},\wh{\gO}')\to
	\cH_{\ast}(\gS(V),\wh{\gO}')\to
  \cH_{\ast}(\gS(V),\wh{\gO})\to\ldots  
\]
By equations (\ref{e:excision1}) and (\ref{e:excision2}), the left hand term excises to the homology of the $(V-s,T')$-ruin, the right hand term to that of the $(V,T)$-ruin and the middle term to that of the $(V,T')$-ruin; leaving the sequence:
\begin{equation}\label{e:ruinsequence}
	\ldots\to \cH_{\ast}(\gO(V-s,T'),\partial)\to
	\cH_{\ast}(\gO(V,T'),\partial)\to
	\cH_{\ast}(\gO(V,T),\partial)\to\ldots
\end{equation}

\begin{Proposition}\label{p:one-letter} $\left[\II(n) \text{ and } \TR(n)\right]\Longrightarrow \V(n)$.
\end{Proposition}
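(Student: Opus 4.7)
The plan is to argue by induction on $k := |S \setminus V|$, the number of generators of $S$ not in $V$. The base case $k = 0$ (that is, $V = S$) is precisely the hypothesis $\II(n)$, which gives the conclusion of $\V(n)$ directly.

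For the inductive step, suppose $k \geq 1$ and assume $\V(n)$ holds for every subset $V' \subseteq S$ containing $t$ with $|S \setminus V'| < k$. Fix any $s \in S \setminus V$ and set $V^+ := V \cup \{s\}$, $T := \{s,t\}$, $T' := \{t\}$. Because $V^+ - s = V$, applying the sequence (\ref{e:ruinsequence}) with the ambient subset $V^+$ (in place of the generic ``$V$'' appearing there) yields, in each degree $i$,
\[
\cdots \to \cH_{i+1}(\gO(V^+, \{s,t\}), \partial) \to \cH_{i}(\gO(V, t), \partial) \to \cH_{i}(\gO(V^+, t), \partial) \to \cdots.
\]
Fix $i > n/2$. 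The right-hand group vanishes by the inductive hypothesis applied to $V^+$, since $|S \setminus V^+| = k-1$. The left-hand group vanishes as well: if $\{s,t\}$ is spherical, this is $\TR(n)$ applied in degree $i+1 > n/2 + 1$; if $\{s,t\}$ is not spherical, then $\gO(V^+, \{s,t\})$ is empty and the group is trivially zero. Exactness then forces $\cH_{i}(\gO(V, t), \partial\gO(V, t)) = 0$, completing the inductive step.

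The main subtlety to keep an eye on is the one-degree shift produced by the connecting homomorphism of (\ref{e:ruinsequence}): the hypothesis $\TR(n)$ supplies vanishing of the two-letter ruin homology only above degree $n/2 + 1$, but it enters the sequence at the slot $i+1$, which is exactly the degree above $n/2 + 1$ when $i > n/2$. This tight matching is what makes the induction close on the nose, and any weaker vanishing in $\TR(n)$ or stronger requirement in $\V(n)$ would break it. Beyond this numerical bookkeeping, the excision isomorphisms (\ref{e:excision1}) and (\ref{e:excision2}) packaged into (\ref{e:ruinsequence}) carry out all the geometric work.
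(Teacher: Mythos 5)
Your proof is correct and follows essentially the same route as the paper: induction on $\Card(S\setminus V)$ with $\II(n)$ as the base case, then the exact sequence (\ref{e:ruinsequence}) with $T=\{s,t\}$, $T'=\{t\}$, killing the outer terms by the inductive hypothesis and by $\TR(n)$. Your explicit tracking of the degree shift at the connecting homomorphism (so that $\TR(n)$ is invoked in degree $i+1>\frac{n}{2}+1$) is actually more careful than the paper's wording, and your handling of non-spherical $\{s,t\}$ matches the paper's separate treatment of the case $m_{st}=\infty$.
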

\begin{proof}  It is clear that $\cH_{i}(\gO(V,t))=0$ for $i>\frac{n}{2}$ whenever $\Card(V)\leq 2$, so we may assume that $\Card(V)>2$.  We induct on $\Card(S-V)$, $\II(n)$ giving us the base case.  Let $V=V'\cup s$ and $t\in V'$.  Assume the result holds for $V$.  If $m_{st}=\infty$ then $(\gO(V',t),\partial)=(\gO(V,t),\partial)$ and we are done.  Otherwise, consider the sequence in equation (\ref{e:ruinsequence}), taking $T=\{s,t\}$, $T'=\{t\}$:
\[
\begin{array}{ccccccc}
0 & \rightarrow & \cH_{n}(\gO(V',t),\partial) & \rightarrow &  \cH_{n}(\gO(V,t),\partial) & \rightarrow &  \cH_{n}(\gO(V,\{s,t\}),\partial) \\
& \rightarrow & \cH_{n-1}(\gO(V',t),\partial) & \rightarrow & \cH_{n-1}(\gO(V,t),\partial) & \rightarrow & \cH_{n-1}(\gO(V,\{s,t\}),\partial) \\
 & & \vdots & & \vdots & & \vdots \\
& \rightarrow & \cH_{k}(\gO(V',t),\partial) & \rightarrow  & \cH_{k}(\gO(V,t),\partial) & \rightarrow & \ldots 
\end{array}
\]
(where $k=\frac{n}{2}+1$ if $n$ is even, $k=\frac{n+1}{2}$ if $n$ is odd).  $\cH_{i}(\gO(V,t),\partial)=0$ for $i>\frac{n}{2}$ by assumption and $\TR(n)$ implies $\cH_{i}(\gO(V,\{s,t\}),\partial)=0$.  So by exactness,  $\cH_{i}(\gO(V',t),\partial)=0$ for $i>\frac{n}{2}$.
\end{proof}

\begin{Theorem}\label{t:Singer} $\V(n) \Longrightarrow \I(n)$.
\end{Theorem}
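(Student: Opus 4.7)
The plan is to prove, by induction on $\Card(V)$, the auxiliary statement that $\cH_i(\gS(V))=0$ for every $V\subseteq S$ and every $i>n/2$. Specializing to $V=S$ yields $\cH_i(\gS)=0$ for $i>n/2$, and $\ltwo$-Poincar\'e duality on the $n$-manifold $\gS$ then promotes this to $\cH_i(\gS)=0$ for all $i\neq n/2$, which is $\I(n)$.

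The base case $V=\emptyset$ is immediate: the subcomplex $\gS(\emptyset)$ consists only of cells of type $\emptyset$, i.e., the vertices of $\gS_{cc}$, so it is $0$-dimensional and $\cH_i(\gS(\emptyset))=0$ for every $i\geq 1$. For the inductive step, fix $t\in V$ and apply the weakly exact sequence (\ref{e:ruinsequence}) with $T=\{t\}$, $s=t$, and $T'=\emptyset$. Because $\gO(V,\emptyset)=\gS(V)$ with $\partial\gO(V,\emptyset)=\emptyset$, and similarly with $V-t$ in place of $V$, the sequence specializes to
\[
\cdots\to\cH_i(\gS(V-t))\to\cH_i(\gS(V))\to\cH_i(\gO(V,t),\partial\gO(V,t))\to\cdots .
\]
By $\V(n)$ the rightmost term vanishes for $i>n/2$, and by the inductive hypothesis the leftmost term vanishes for $i>n/2$; weak exactness then forces $\cH_i(\gS(V))=0$ for $i>n/2$, completing the induction.

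No serious obstacle is anticipated. The hypothesis $\V(n)$ is exactly the tool needed to strip a generator off $S$ one at a time through the exact sequence already assembled in Section \ref{ss:scruins}, and the concluding Poincar\'e duality step is standard for the manifold $\gS$. The only mild bookkeeping is verifying that the $(V,\emptyset)$-ruin reduces to the pair $(\gS(V),\emptyset)$ so that (\ref{e:ruinsequence}) takes the collapsed form displayed above.
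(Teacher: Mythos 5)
Your argument is correct and is essentially the paper's own proof: both use the weakly exact sequence (\ref{e:ruinsequence}) with $T=\{t\}$, $T'=\emptyset$ to pass from $\gS(V-t)$ to $\gS(V)$, strip off generators one at a time down to $\gS(\emptyset)$, and finish with Poincar\'e duality. The only cosmetic difference is that you package the iteration as an induction on $\Card(V)$ and conclude vanishing of the middle term directly, whereas the paper records the isomorphism $\cH_i(\gS(V-t))\cong\cH_i(\gS(V))$ for $i>\frac{n}{2}$; these are interchangeable.
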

\begin{proof} Let $V\subseteq S$ and $t\in V$.  Consider the following form of (\ref{e:ruinsequence}), where $T=\{t\}$:
\[
\begin{array}{cccccccc}
	0 & \to & \cH_{n}(\gS(V-t)) & \to & \cH_{4}(\gS(V)) & \to & \cH_{4}(\gO(V,t),\partial) & \to \\
 & \to & \cH_{3}(\gS(V-t)) & \to & \cH_{3}(\gS(V)) & \to & 	\cH_{3}(\gO(V,t),\partial) & \to \\
 & & \vdots & & \vdots & & \vdots \\
 & \to & \cH_{k}(\gS(V-t)) & \to & \cH_{k}(\gS(V)) & \to & \cH_{k}(\gO(V,t),\partial) & \to 
\end{array}
\]
(where $k=\frac{n}{2}+1$ if $n$ is even, $k=\frac{n+1}{2}$ if $n$ is odd).  By $\V(n)$, $\cH_{i}(\gO(V,t),\partial)=0$ for $i>\frac{n}{2}$.  So by exactness,
\[
\cH_{i}(\gS(V-t))\cong \cH_{i}(\gS(V)),
\]
for $i>\frac{n}{2}$.  It follows that $\cH_{i}(\gS)\cong \cH_{i}(\gS(\emptyset))=0$ for $i>\frac{n}{2}$ and hence, by Poincar\'e duality, $\cH_{i}(\gS)=0$ for $i\neq \frac{n}{2}$.
\end{proof}

\section{Even Coxeter systems}\label{s:coxeter}
\subsection{The Compbinatorics of Even systems}\label{ss:combin}
We present some of the background for the combinatorial arguments used in \cite{schroedereven}.  Let $(W,S)$ be a Coxeter system.  Given a subset $U$ of $S$, define $W_{U}$ to be the subgroup of $W$ generated by the elements of $U$.  $(W_U,U)$ is a Coxeter system.  A subset $T$ of $S$ is \textit{spherical} if $W_T$ is a finite subgroup of $W$.  In this case, we will also say that the subgroup $W_{T}$ is spherical.  We say the Coxeter system $(W,S)$ is \textit{even} if for any $s,t\in S$ with $s\neq t$, $m_{st}$ is either even or infinite.

Given $w\in W$, we call an expression $w=(s_{1}s_{2}\cdots s_{n})$ \emph{reduced} if there does not exist an integer $m<n$ with $w=(s'_{1}s'_{2}\cdots s'_{m})$.  Define the \emph{length of $w$}, $l(w)$, to be the integer $n$ such that $(s_{1}s_{2}\cdots s_{n})$, is a reduced expression for $w$.  Denote by $S(w)$ the set of elements of $S$ which comprise a reduced expression for $w$.  This set is well-defined, \cite[Proposition 4.1.1]{davisbook}.

For $T\subseteq S$ and $w\in W$, the coset $wW_{T}$ contains a unique element of minimal length.  This element is said to be $(\emptyset, T)$-reduced.  Moreover, it is shown in \cite[Ex. 3, pp. 31-32]{bourbaki}, that an element is $(\emptyset, T)$-reduced if and only if $l(wt)>l(w)$ for all $t\in T$.  Likewise, we can define the $(T,\emptyset)$-reduced elements to be those $w$ such that $l(tw)>l(w)$ for all $t\in T$.  So given $X,Y\subseteq S$, we say an element $w\in W$ is $(X,Y)$-reduced if it is both $(X,\emptyset)$-reduced and $(\emptyset,Y)$-reduced.

\paragraph{Shortening elements of $W$.}\label{sss:shortening} We have the so-called ``Exchange'' (\textbf{E}) condition for Coxeter systems (\cite[Ch 4. Section 1, Lemma 3]{bourbaki} or \cite[Theorem 3.3.4]{davisbook}): 
\begin{itemize}
	\item\label{i:exchange} (\textbf{E}) Given a reduced expression $w=(s_1\cdots s_k)$ and an element $s\in S$, either $\ell(sw)=k+1$ or there is an index $i$ such that 
	\[sw=(s_1\cdots\wh{s_i}\cdots s_k).\]
\end{itemize}
In the case of even Coxeter systems, the parity of a given generator in the set expressions for an element of $W$ is well-defined.  (We prove this herein, Lemma \ref{l:T-hom}.)  So, in $(\textbf{E})$, $s_i=s$; i.e, if an element of $s\in S$ shortens a given element of $W$, it does so by deleting an instance of $s$ in an expression for $w$.

It is also a fact about Coxeter groups (\cite[Theorem 3.4.2]{davisbook}) that if two reduced expressions represent the same element, then one can be transformed into the other by replacing alternating subwords of the form $(sts\ldots)$ of length $m_{st}$ by the alternating word $(tst\ldots)$ of length $m_{st}$.  The proof of the first of the following two lemmas follows immediately from this.  The proof of the second depends on the first Lemma \ref{l:onereduction} and may be found in \cite{schroedereven}.

\begin{Lem}\label{l:onereduction} Let $t\in S$, $w\in W_{S-t}$ and $v\in W$ with $wtv$ reduced.  If there exists an $r\in S(w)-S(v)$ with $(rt)^2\neq 1$, then all $r$'s appear to the left of all $t$'s in any reduced expression for $wtv$.
\end{Lem}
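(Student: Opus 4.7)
The plan is to use Tits's braid-move theorem (quoted immediately above the statement) together with the natural concatenation as a base reduced expression. First, I would observe that since $wtv$ is reduced, stringing together any reduced expression for $w$, the single letter $t$, and any reduced expression for $v$ yields a reduced expression for $wtv$. In this expression every $r$ lies inside the $w$-block (because $r \in S(w)$ and $t \notin S(w)$, so the $w$-block contains no $t$), while every $t$ is either the middle letter or lies inside the $v$-block; and since $r \notin S(v)$, no $r$ appears to the right of the middle $t$. So the concatenation $w \cdot t \cdot v$ already exhibits the desired property that every $r$ lies to the left of every $t$.

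Next, I would apply Tits's theorem to connect this concatenation to an arbitrary reduced expression for $wtv$ via a finite sequence of braid moves. The argument then proceeds by induction on the length of such a sequence: it suffices to show a single braid move cannot destroy the property ``all $r$'s to the left of all $t$'s.'' Braid moves involving neither $r$ nor $t$ obviously preserve the positions of $r$'s and $t$'s. A braid move involving $r$ together with some third generator $s \notin \{r,t\}$ shuffles $r$'s and $s$'s inside an alternating block that contains no $t$; since by the inductive hypothesis every $t$ in the expression lies to one side of that block (to the right, because the block contains an $r$), the relative order of $r$'s and $t$'s is preserved. The case involving $t$ and some $s \notin \{r,t\}$ is symmetric.

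The critical case is a braid move involving both $r$ and $t$. The hypothesis $(rt)^2 \neq 1$ says $m_{rt} \neq 2$, so such a move would require an alternating subword in $r$ and $t$ of length $m_{rt} \geq 3$. Any such subword necessarily contains consecutive letters of the form $\ldots t r \ldots$, placing an $r$ strictly to the right of a $t$ and so contradicting the inductive hypothesis. Hence no braid move of this type can legally be applied to an expression enjoying the property, and the property is preserved at every step.

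The only real subtlety, and the place where the hypothesis $(rt)^2 \neq 1$ is consumed, lies in this last step: one must recognize that $m_{rt} \neq 2$ functions not by making the bad braid move preserve the property but by preventing such a move from being applicable in the first place. Once that is in hand, the remaining case analysis is little more than an unpacking of what a braid move does.
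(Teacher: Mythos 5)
Your proof is correct and follows the same route the paper intends: the paper simply asserts that the lemma ``follows immediately'' from Tits's braid-move theorem, and your argument -- start from the concatenated reduced expression $w\cdot t\cdot v$, then check that each type of braid move preserves the property, with $(rt)^2\neq 1$ forcing $m_{rt}\geq 3$ so that an $\{r,t\}$-braid move is never applicable -- is exactly the unpacking of that claim. No gaps.
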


\begin{Lemma}\label{l:reduction} Let $(W,S)$ be an even Coxeter system, let $t,s\in S$ be such that $2<m_{st}<\infty$ and let $U_{st}=\{r\in S\mid m_{rt}=m_{rs}=2\}$.  Suppose that $tstw'=wtv$ (both reduced) where $w'\in W$, $w\in W_{S-t}$ and $S(v)\subset U_{st}\cup\{s,t\}$.  Then $S(w)\subseteq U_{st}\cup\{s\}$. 
\end{Lemma}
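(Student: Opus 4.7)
The plan is to argue by contradiction. Suppose there exists $r\in S(w)$ with $r\notin U_{st}\cup\{s\}$. Because $w\in W_{S-t}$ one has $r\neq t$, and by hypothesis $r\neq s$, so $r\notin U_{st}\cup\{s,t\}$. Combined with $S(v)\subseteq U_{st}\cup\{s,t\}$ this forces $r\notin S(v)$. The argument then splits on whether $r$ commutes with $t$.

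If $(rt)^{2}\neq 1$, Lemma \ref{l:onereduction} applies directly to the reduced decomposition $wtv$ (with $w\in W_{S-t}$) for this choice of $r$: in any reduced expression for $wtv$, every $r$ must appear to the left of every $t$. But $wtv=tstw'$ is another reduced expression for the same element whose leftmost letter is $t$, leaving no position for any $r$ to its left and contradicting $r\in S(w)\subseteq S(wtv)$.

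The substantive case is $m_{rt}=2$, where $r\notin U_{st}$ forces $m_{rs}>2$; so $r$ commutes with $t$ but braids nontrivially with $s$, and Lemma \ref{l:onereduction} is vacuous. Here I would induct on $\l(w)$, exploiting $rt=tr$ to push $r$'s in $w$ across the central $t$. Concretely, if $r$ is a right descent of $w$, choose a reduced word $w=w_{0}r$ with $w_{0}\in W_{S-t}$ and rewrite
\[
wtv \;=\; w_{0}\,r\,t\,v \;=\; w_{0}\,t\,(rv),
\]
still a reduced expression for the same group element, with $w_{0}$ of length $\l(w)-1$ and new rightmost factor $rv$ of support $\{r\}\cup S(v)$. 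When $r$ is not a right descent of $w$, an initial application of the Exchange condition (\textbf{E}) together with the even-parity of letter counts (Lemma \ref{l:T-hom}) is used to transform the given reduced word for $w$ into one ending in $r$.

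The main obstacle is that the new rightmost factor $rv$ escapes the alphabet $U_{st}\cup\{s,t\}$, so the hypothesis does not immediately descend to a shorter instance. To close the argument I would analyze the equation inside the three-generator parabolic $W_{\{r,s,t\}}$: the left-hand side $tstw'$ has its first three letters in $\{s,t\}$, so any braid move absorbing the rogue $r$ into this prefix would require an $(rs)^{k}=(sr)^{k}$ braid of length $2k=m_{rs}\geq 4$ to occur inside a length-three word, which is impossible. Formalizing this final contradiction, and making the inductive reduction pass cleanly when $r$ is not itself a right descent of $w$, is the technical heart of the proof.
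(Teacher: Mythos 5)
Your first case is correct and is in line with the paper's indication that the proof of Lemma \ref{l:reduction} ``depends on Lemma \ref{l:onereduction}'' (the paper itself defers the details to \cite{schroedereven}): if $r\in S(w)$ with $r\notin U_{st}\cup\{s\}$ and $(rt)^2\neq 1$, then $r\in S(w)-S(v)$, Lemma \ref{l:onereduction} forces every $r$ to precede every $t$ in \emph{every} reduced expression for $wtv$, and the reduced expression $tstw'$ beginning with $t$ gives the contradiction. That part stands.

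The case $m_{rt}=2$, $m_{rs}>2$ is, however, a genuine gap, and you say as much yourself (``formalizing this final contradiction \ldots is the technical heart of the proof''). Three specific problems. First, the inductive reduction does not descend: after rewriting $w_0rtv=w_0t(rv)$ the new right-hand factor $rv$ has support containing $r\notin U_{st}\cup\{s,t\}$, so the hypothesis of the lemma fails for the shorter instance and the induction has nothing to invoke --- you identify this obstacle but do not overcome it. Second, the step ``when $r$ is not a right descent of $w$, \ldots transform the given reduced word for $w$ into one ending in $r$'' is impossible as stated: $w$ admits a reduced expression ending in $r$ if and only if $\ell(wr)<\ell(w)$, i.e.\ precisely when $r$ \emph{is} a right descent, so no application of (\textbf{E}) or of parity (Lemma \ref{l:T-hom}) can produce such a word otherwise. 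Third, the closing argument about ``absorbing the rogue $r$'' via braid moves into the prefix $tst$ is not a proof: Tits' theorem connects reduced expressions by braid moves, but individual letters do not have well-defined trajectories under such moves, and making a claim of this kind rigorous is exactly the content that Lemma \ref{l:onereduction} supplies in the non-commuting case and that is missing here in the commuting one. Since the case $m_{rt}=2$, $m_{rs}>2$ is the substantive content of the lemma, the proposal does not establish the statement.
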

%

\subsection{Coloring the Davis Complex}
Here and for the remainder of this section, we require that $(W,S)$ be an even Coxeter system with nerve $L$.  Fix $t\in S$ and let $U:=\left\{s\in S\mid m_{st}<\infty\right\}$, and let $\gO$ and $\partial\gO$ be defined as in Section \ref{s:davis}.  The following is a generalization of the argument put forth in \cite{schroedereven}.

Any $s\in U$ has the property that $m_{st}<\infty$.  Let $S':=\{s\in U\mid m_{st}>2\}$, and assume that $S'$ is not empty.  The group $W_{U}$ has the following properties.

\begin{Lem}\label{l:srel} Suppose that $L$ is flag.  Then for $s,s'\in S'$, either $s=s'$, or $m_{ss'}=\infty$.
\end{Lem}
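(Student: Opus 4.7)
The plan is to argue by contradiction using the flag condition on $L$ together with the standard positive-definiteness criterion for finite rank-three Coxeter groups. Suppose, toward a contradiction, that $s \neq s'$ lie in $S'$ and $m_{ss'} < \infty$. Since $s, s' \in U$, the values $m_{st}$ and $m_{s't}$ are also finite, so each of the three pairs $\{s,t\}$, $\{s',t\}$, $\{s,s'\}$ is spherical and is therefore a $1$-simplex of $L$. Note that $s,s',t$ are pairwise distinct (if $t=s$ then $m_{st}=1$, contradicting $s\in S'$). Because $L$ is flag, the presence of all three edges forces $\{s, s', t\}$ to span a $2$-simplex of $L$, i.e. $W_{\{s,s',t\}}$ must be finite.

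Next I would derive a contradiction from the numerical constraints imposed by the even condition and the definition of $S'$. Since the system is even and $s, s' \in S'$, the labels $m_{st}$ and $m_{s't}$ are even and strictly greater than $2$, hence at least $4$; and $m_{ss'} \geq 2$. Set $c_1 := \cos(\pi/m_{ss'})$, $c_2 := \cos(\pi/m_{st})$, $c_3 := \cos(\pi/m_{s't})$. The Gram matrix of the canonical bilinear form of $(W_{\{s,s',t\}},\{s,s',t\})$ has determinant
\[
1 - c_1^2 - c_2^2 - c_3^2 - 2 c_1 c_2 c_3,
\]
and $W_{\{s,s',t\}}$ is finite if and only if this form is positive definite, which in particular requires this determinant to be strictly positive. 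Because $c_2, c_3 \geq \cos(\pi/4) = \sqrt{2}/2$, we have $c_2^2 + c_3^2 \geq 1$, and the remaining summands $c_1^2 + 2 c_1 c_2 c_3$ are non-negative; so the determinant is $\leq 0$. This contradicts the finiteness of $W_{\{s,s',t\}}$, and hence $m_{ss'} = \infty$.

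I do not expect either step to present a real obstacle: the flag reduction is immediate from the definition, and the numerical input is the easiest slice of the rank-three classification. An alternative to the determinant computation is simply to invoke the classification directly: the only rank-three finite Coxeter groups whose labels are all even (or $\infty$) are $A_1^{3}$ and $A_1 \times I_2(2p)$, both of which have at least one pair of commuting generators meeting $t$, and this conflicts with $s,s' \in S'$.
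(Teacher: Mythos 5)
Correct, and essentially the same as the paper's proof: the flag condition forces $\{s,s',t\}$ to be spherical, and then the rank-three finiteness criterion fails because $m_{st},m_{s't}\geq 4$. The paper phrases the numerical step via the angle-sum inequality $\frac{1}{m_{ss'}}+\frac{1}{m_{st}}+\frac{1}{m_{ts'}}\leq 1$ rather than the Gram determinant, but these are equivalent formulations of the same criterion.
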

\begin{proof} Suppose that $s\neq s'$ and that $m_{ss'}<\infty$.  Then $\{s,s'\}\in \cs$, and since $s,s'$ are both in $U$, the vertices corresponding to $s$, $s'$ and $t$ are pairwise connected in $L$. $L$ is a flag complex, so this implies that $\{s,s',t\} \in \cs$.  But
\[ \frac{1}{m_{ss'}}+\frac{1}{m_{st}}+\frac{1}{m_{ts'}}\leq \frac{1}{m_{ss'}}+\frac{1}{4}+\frac{1}{4}\leq 1.\]
This contradicts $\{s,s',t\}$ being a spherical subset.  So we must have that $m_{ss'}=\infty$.
\end{proof}

\begin{Cor}\label{c:commute} Let $s\in S'$ and let $T\in \cs_{\geq\{s,t\}}$.  Then $m_{ut}=m_{us}=2$ for $u\in T-\{s,t\}$.  
\end{Cor}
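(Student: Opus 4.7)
The plan is to reduce the claim to a constraint on the triple $\{s,t,u\}$ for each $u\in T-\{s,t\}$, in the spirit of Lemma \ref{l:srel}. First I would observe that $\cs$ is closed under taking subsets (any subgroup of a finite group is finite), so from $T\in\cs_{\geq\{s,t\}}$ containing the triple $\{s,t,u\}$ we conclude that $\{s,t,u\}$ is itself spherical, hence a simplex of $L$; in particular all three exponents $m_{st}$, $m_{us}$, $m_{ut}$ are finite. The classification of finite three-generator Coxeter groups then supplies the strict inequality
\[\frac{1}{m_{us}} + \frac{1}{m_{ut}} + \frac{1}{m_{st}} > 1.\]

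Next I would combine two numerical constraints. Since $s\in S'$, the exponent $m_{st}$ is even and strictly greater than $2$, hence $m_{st}\geq 4$, giving $\tfrac{1}{m_{st}}\leq \tfrac{1}{4}$. Since $(W,S)$ is even, each of $m_{us}$ and $m_{ut}$ is either $2$ or at least $4$. A short case check then eliminates every possibility except $m_{us}=m_{ut}=2$: if either exponent is $\geq 4$, the left-hand side of the displayed inequality is bounded above by $\tfrac{1}{2}+\tfrac{1}{4}+\tfrac{1}{4}=1$ (or smaller), contradicting strictness.

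I do not anticipate a genuine obstacle. The argument is essentially the same as the proof of Lemma \ref{l:srel}, and amounts to the routine bookkeeping of the spherical-triangle criterion under the evenness hypothesis; in fact the flag assumption on $L$ is not needed here, because we pass from $T\in\cs$ directly to the triple $\{s,t,u\}\in\cs$ without having to assemble a simplex from pairwise edges.
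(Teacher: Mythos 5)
Your proof is correct and is essentially the argument the paper intends: the corollary is left unproved there precisely because it is the same spherical-triangle bookkeeping as Lemma \ref{l:srel}, applied to the spherical triple $\{s,t,u\}$ with $m_{st}\geq 4$ and the evenness of $m_{us}$, $m_{ut}$. Your observation that flagness is not needed (since $T$ is already spherical) is also accurate.
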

In other words, the generators from $T-\{s,t\}$ commute with both $s$ and $t$.

\paragraph{Links.}  Now let $L_{st}$ denote the link in $L$ of the edge connecting the vertices $s$ and $t$.  The above Corollary states that the generators in the vertex set of $L_{st}$ commute with both $s$ and $t$.  As in Lemma \ref{l:reduction}, denote this set of generators by $U_{st}$.  

Of particular interest to us will be elements of $W_U$ with a reduced expression of the form $tst\cdots st$ for some $s\in S'$.  Since $W$ is even, this expression is unique, and we have the following lemma.  

\begin{Lem}\label{l:XY-reduced} Let $s\in S'$ and let $u\in W_{\{s,t\}}$ be such that $u=tst\cdots st$, is a reduced expression beginning and ending with $t$.  Then $u$ is $(U-t,U-t)$-reduced.
\end{Lem}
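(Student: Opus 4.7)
The plan is to show directly that no generator $r \in U - t$ can shorten $u$ from either side. The first step is to invoke the strengthened Exchange condition for even Coxeter systems described in the ``Shortening elements of $W$'' paragraph of Section \ref{ss:combin}: if $r \in S$ satisfies $\ell(ru) \le \ell(u)$, then $ru$ is obtained from some reduced expression of $u$ by deleting an instance of $r$, and in particular $r \in S(u)$. Since $u = tst\cdots st$ is already reduced, $S(u) = \{s,t\}$. Any candidate $r \in U - t$ thus satisfies $r \ne t$, and the only way $r$ could shorten $u$ on the left is if $r = s$. The same reasoning applied to $ur$ reduces the right-reduced condition to the single case $r = s$ as well.

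The second step is to rule out $r = s$ by a direct length computation in the dihedral subgroup $W_{\{s,t\}}$. Because $(W,S)$ is even and $s \in S'$, $m_{st}$ is either $\infty$ or an even integer $\ge 4$. Since $u$ is reduced of odd length $2k+1$, and the longest element of the dihedral group $W_{\{s,t\}}$ has length $m_{st}$ (which is even when finite), the inequality $2k+1 < m_{st}$ holds, with the convention $<\infty$. Then $su = stst\cdots t$ is an alternating word of length $2k+2 \le m_{st}$, hence a reduced expression in $W_{\{s,t\}}$, so $\ell(su) = \ell(u)+1$. Symmetrically, $us = tst\cdots sts$ is an alternating word of length $2k+2 \le m_{st}$, hence reduced, giving $\ell(us) = \ell(u)+1$.

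Combining the two steps, for every $r \in U-t$ we have $\ell(ru) > \ell(u)$ and $\ell(ur) > \ell(u)$, which by the characterization recalled just before Lemma \ref{l:onereduction} (the Bourbaki characterization of $(\emptyset,T)$- and $(T,\emptyset)$-reduced elements) says exactly that $u$ is $(U-t, U-t)$-reduced. The argument requires no appeal to Lemma \ref{l:reduction} or Corollary \ref{c:commute}; the main (and essentially only) subtlety is in invoking the even-Coxeter upgrade of the Exchange condition to force any left- or right-descent of $u$ into $S(u) = \{s,t\}$. After that, the claim reduces to the trivial observation that an odd-length alternating reduced word in a dihedral group is neither the longest element nor obtainable by a further alternating extension that overshoots $m_{st}$.
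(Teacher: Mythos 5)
The paper states Lemma \ref{l:XY-reduced} without proof, so there is nothing to compare against directly; judged on its own, your argument is correct and complete. Step 1 is sound: a left (or right) descent of $u$ must lie in $S(u)=\{s,t\}$, so the only candidate in $U-t$ is $s$. (In fact you do not even need the even-Coxeter upgrade of the Exchange condition here: if $\ell(ru)<\ell(u)$ then $u$ has a reduced expression beginning with $r$, so $r\in S(u)$ by the well-definedness of $S(\cdot)$; the evenness is genuinely needed only in Step 2.) Step 2 is where the hypotheses $s\in S'$ and $(W,S)$ even do the work: $m_{st}\ge 4$ is even or infinite, $u$ is a reduced alternating word of odd length $2k+1$, so $2k+1\le m_{st}$ and by parity $2k+1<m_{st}$, whence the alternating words $su$ and $us$ of length $2k+2\le m_{st}$ are reduced in the dihedral group $W_{\{s,t\}}$. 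You implicitly use that the length function of the standard parabolic $W_{\{s,t\}}$ agrees with the restriction of $\ell_W$ (so that ``reduced in $W_{\{s,t\}}$'' yields $\ell(su)=\ell(u)+1$ in $W$); this is standard and worth a one-line citation, but it is not a gap. Altogether the two steps give $\ell(ru)>\ell(u)$ and $\ell(ur)>\ell(u)$ for every $r\in U-t$, which is exactly the Bourbaki characterization of $(U-t,U-t)$-reduced recalled in Section \ref{ss:combin}.
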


\begin{Lem}\label{l:T-hom}  Let $V,T\subseteq S$ and consider the function $g_{VT}:W_{V}\rightarrow W_{T}$ induced by the following rule: $g_{VT}(s)=s$ if $s\in V\cap T$ and $g_{VT}(s)=e$ (the identity element of $W$) for $s\in V-T$.  Then $g_{VT}$ is a homomorphism.
\end{Lem}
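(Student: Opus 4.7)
The plan is to invoke the universal property of the Coxeter presentation of $W_V$. Since $(W_V,V)$ is itself a Coxeter system, $W_V$ is presented on the generators $V$ subject to the relations $(ss')^{m_{ss'}}=1$ for all pairs $s,s'\in V$ with $m_{ss'}<\infty$ (the diagonal case $s=s'$, where $m_{ss}=1$, giving the relation $s^2=1$). Thus, to show that the prescribed assignment $s\mapsto g_{VT}(s)$ on generators extends to a homomorphism $W_V\to W_T$, it suffices to check that each defining relator of $W_V$ maps to the identity of $W_T$.

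I would dispatch this by cases on how many of $s,s'$ lie in $V\cap T$. When both $s,s'\in V\cap T$, the relator maps to $(ss')^{m_{ss'}}$, which is itself a defining relation in $W_T$ since $\{s,s'\}\subseteq T$ and $m_{ss'}$ is the same in either system. When both lie in $V-T$, both generators map to $e$ and the image of the relator is trivially $e$. The diagonal case $s=s'$ poses no difficulty: either $s\in V\cap T$ and $s^2=e$ in $W_T$, or $s\in V-T$ and the image is $e^2=e$. The one substantive case is the mixed one: say $s\in V\cap T$ and $s'\in V-T$. Then the relator maps to $(s\cdot e)^{m_{ss'}}=s^{m_{ss'}}$ in $W_T$, and equality to $e$ demands that $m_{ss'}$ be even.

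This mixed case is precisely where the standing even hypothesis on $(W,S)$ carries the argument: by definition, $m_{ss'}$ is either even or infinite, and in the latter case no relation needs to be checked at all. In every finite-order mixed case, $m_{ss'}$ is therefore even and $s^{m_{ss'}}=(s^{2})^{m_{ss'}/2}=e$ in $W_T$. This is the only step in the verification that uses any hypothesis beyond formal manipulation, and so also the only potential obstacle; without evenness, $g_{VT}$ would fail to respect precisely those mixed relations with odd $m_{ss'}$. Once this case is handled, the universal property yields the required homomorphism.
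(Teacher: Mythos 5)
Your proof is correct and follows essentially the same route as the paper: both verify that the assignment on generators kills every defining relator of the Coxeter presentation of $W_V$, with the only nontrivial case being the mixed one ($s\in T$, $s'\notin T$), where the relator maps to $s^{m_{ss'}}$ and the standing evenness hypothesis on $(W,S)$ guarantees this is the identity. The paper's version is just a terser case table; your identification of the mixed case as the sole place the even hypothesis is used matches the paper's closing remark exactly.
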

\begin{proof}  We show that $g_{VT}$ respects the relations in $W_{V}$.  Let $s,u\in V$ be such that $(su)^m=1$.  Then 
\begin{equation*}
	g_{VT}((su)^m)=
	\begin{cases}
		(su)^m & \text{ if } s\in T, u\in T\\
		s^m & \text{ if } s\in T, u\notin T\\
		u^m & \text{ if } u\in T, s\notin T\\
		e & \text{ if } s\notin T, u\notin T.
	\end{cases}
\end{equation*}
In all cases, since $(W_V,V)$ is even, $g_{VT}((su)^m)=e$.  
\end{proof}

\paragraph{Group Action on Cosets.}  Then with $T\in \cs_{\geq t} $ and $U$ as above, we define an action of $W_U$ on the set of cosets $W_T/W_{T-t}$: For $w\in W_{U}$ and $v\in W_{T}$, define 
\begin{equation}\label{e:action}
	w\cdot vW_{T-t}=g_{UT}(w)vW_{T-t}.
\end{equation}

\paragraph{Painting vertices of $\gO$.}  Set
\begin{equation*}
	A=\prod_{T\in\cs_{\geq t}}W_{T}/W_{T-t}.
\end{equation*}
We call $A$ the set of colors and note that it is a finite set.  The action defined in equation (\ref{e:action}) extends to a diagonal $W_{U}$-action on $A$; for $w\in W_{U}$ and $a\in A$, write $w\cdot a$ to denote $w$ acting on $a$.  Let $\bar{e}$ be the element of $A$ defined by taking the trivial coset $W_{T-t}$ for each $T\in \cs_{\geq t}$.  Vertices of $\gO$ correspond to group elements of $W_{U}$, so we paint the vertices of $\gO$ by defining a map $c:W_{U}\rightarrow A$ with the rule $c(w):=w\cdot\bar{e}$.

\begin{Rem}\label{r:trivial} If an element $w\in W_{U}$ does not contain $t$ in any reduced expression, then $w$ acts trivially on the element $\bar{e}$, i.e. $w\cdot\bar{e}=\bar{e}$.
\end{Rem}  

\paragraph{Painting boundary collars.}  We paint the space $wK(U)$ with $c(w)$.  In this way, all of $\gO$ is colored with some element of $A$.  For vertices $w$ and $w'$ of the same component $B$ of $\partial\gO$, $h=w^{-1}w'\in W_{U-t}$, so $c(w')=c(wh)=wh\cdot\bar{e}=w\cdot\bar{e}=c(w)$, where the third equality follows from Remark \ref{r:trivial}.  Therefore all of the boundary collar containing $w$ is painted with $c(w)$.  Note that each component of $\partial\gO$ is monochromatic while the interior of $\gO$ is not.

\begin{Lem}\label{l:samecolordisjoint} Let $D=B\times\left[0,1\right]$ and $D'=B'\times\left[0,1\right]$ be boundary collars where $B$ and $B'$ are different components of $\partial\gO$. Suppose that the vertices of $B$ and $B'$ have the same color.  Then $D\cap D'=\emptyset$.
\end{Lem}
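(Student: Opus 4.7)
The plan is to argue by contradiction: suppose $D \cap D' \neq \emptyset$. Because boundary collars meet only along their ``inner'' boundaries, and those are assembled from closed cubes of type $T$ with $T \in \cs_{\geq t}$, the intersection must contain such a cube $C$. Write its vertex set as a spherical coset $uW_T$ for some $u \in W_U$ and some $T \in \cs_{\geq t}$; note that $T \subseteq U$, since any $s \in T$ is joined to $t$ by an edge in $L$ and so has $m_{st} < \infty$.

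Next I would analyze how the vertices of $C$ distribute among the components of $\partial\gO$. Two vertices $uv, uv' \in uW_T$ lie in the same component (indexed by cosets of $W_{U-t}$ in $W_U$) if and only if $(uv)^{-1}(uv') = v^{-1}v' \in W_{U-t}$. Invoking the standard parabolic intersection identity $W_A \cap W_B = W_{A \cap B}$ for Coxeter groups, this element lies in $W_T \cap W_{U-t} = W_{T-t}$. Hence the vertex set of $C$ partitions into the $|W_T/W_{T-t}|$ cosets of $W_{T-t}$ in $W_T$, each meeting a distinct component of $\partial\gO$. Because $D \supseteq C$, $D' \supseteq C$, and $B \neq B'$, there exist $v, v' \in W_T$ in distinct $W_{T-t}$-cosets with $uv \in B$ and $uv' \in B'$.

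To finish, I would compute the $T$-coordinates of $c(uv)$ and $c(uv')$ using Lemma \ref{l:T-hom}. The $T$-coordinate of $c(uv) = uv \cdot \bar{e}$ is $g_{UT}(uv) W_{T-t}$, which by the homomorphism property equals $g_{UT}(u) g_{UT}(v) W_{T-t} = g_{UT}(u) v W_{T-t}$, using that $v \in W_T \subseteq W_U$ forces $g_{UT}(v) = v$. The same calculation for $uv'$ yields $T$-coordinate $g_{UT}(u) v' W_{T-t}$. Since $vW_{T-t} \neq v'W_{T-t}$, these coordinates differ, so $c(uv) \neq c(uv')$, contradicting the hypothesis that $B$ and $B'$ are painted the same color. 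The main subtlety is the opening step of pinning down a shared cube of type $T$ with $t \in T$ and recognizing that its vertex set is a coset $uW_T$; once that is in hand, the parabolic intersection identity funnels the combinatorics through $W_{T-t}$, and the homomorphism $g_{UT}$ finishes the job.
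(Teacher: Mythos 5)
Your argument is correct and is essentially the paper's own proof in a slightly different dress: the paper takes $w\in B$, $w'\in B'$ with $wK(U)\cap w'K(U)\neq\emptyset$, sets $V=S(w^{-1}w')\ni t$, and shows the $V$-coordinate of the color forces $w^{-1}w'\in W_{V-t}$, a contradiction, which is exactly your computation of the $T$-coordinate via $g_{UT}$ together with the observation that the components of $\partial\gO$ meeting a cell of type $T$ are indexed by $W_T/W_{T-t}$. One imprecision to fix: it is not true that $D\supseteq C$ and $D'\supseteq C$ for a full closed cell $C$ (indeed, the vertices of $C$ necessarily spread over several components, so no single collar contains all of $C$); what you actually have, and all you need, is that the $0$-simplex corresponding to $uW_T$ lies in $wK(U)\cap w'K(U)$ for some $w\in B$ and $w'\in B'$, which forces $w,w'\in uW_T$ and hence gives your $uv\in B$ and $uv'\in B'$.
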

\begin{proof} Suppose, by way of contradiction, that $D\cap D'\neq\emptyset$, i.e. there exist vertices $w\in B$, $w'\in B'$ such that $c(w)=c(w')$ and $wK(U)\cap w'K(U)\neq\emptyset$.  Let $V=S(v)$, where $v=w^{-1}w'$, and since $w$ and $w'$ are from different components of $\partial\gO$, $t\in V$.  Now $c(w)=c(w')\Rightarrow w\cdot\bar{e}=wv\cdot\bar{e}\Rightarrow \bar{e}=v\cdot\bar{e}$. Thus, for any $T\in \cs_{\geq t}$, we have that 
\begin{equation}\label{e:h}
	v\cdot W_{T-t}=W_{T-t}.
\end{equation}
But since $v\in W_{V}$, the action of $v$ on $W_{V}/W_{V-t}$ defined in (\ref{e:action}) is left multiplication by $v$.  But by equation (\ref{e:h}), we have that $v\in W_{V-t}$; a contradiction.
\end{proof}

\paragraph{$c$-collars.} Now for $c\in A$, define the \emph{$c$-collar}, $F_{c}$, to be the disjoint union of the boundary collars $D=B\times\left[0,1\right]$ where each component $B$ of $\partial\gO$ has the color $c$.  The collection of $c$-collars, for all colors $c$, is a finite cover of $\gO$.  

\subsection{Even and odd collars}\label{ss:evenodd}
Let $T=\{t\}$ and consider the homomorphism $g_{UT}:W_{U}\rightarrow W_{t}$ defined in Lemma \ref{l:T-hom}.  Under $g_{UT}$, an element $w\in W_{U}$ is sent to the identity in $W_{t}$ if $w$ has an even number of $t$'s present in some  factorization (and therefore, all factorizations) as a product of generators from $U$ and an element $w\in W_U$ is sent to $t\in W_{t}$ if $w$ has an odd number of $t$'s present in factorizations.  Thus, we call a vertex $w$ \emph{even} if $g_{UT}(w)=e$; \emph{odd} if $g_{UT}(w)=t$.  If two vertices $w$ and $w'$ are such that $c(w)=c(w')$, then clearly $g_{UT}(w)=g_{UT}(w')$, so we may also classify the colors as even or odd.  A $c$-collar is even or odd as $c$ is even or odd and we refer to it as an ``even or odd collar.''  

We will be employing a Mayer-Vietoris argument using the collars as individual pieces of the union.  So, of fundamental importance will be how these collars intersect.  By Remark \ref{r:trivial}, we know that in order for the vertices of a Coxeter cell to support two different colors, this cell must be of type $T\in\cs_{\geq t}$.  But, for a cell to support two different \emph{even} vertices, $v$ and $v'$, this cell must be of type $T\in\cs_{\geq\{s,t\}}$ for exactly one $s\in S'$ (uniqueness is given by Corollary \ref{c:commute}).  Moreover, $w=v^{-1}v'$ has the properties that (1) $\{s,t\}\subseteq S(w)$ and that (2) it contains at least two, and an even number of $t$'s in any factorization as a product of generators.  Such a $w$ we call \emph{$t$-even}.  

\paragraph{The intersection of even collars.}  Now let $L$ be a flag triangulation of $\BS^{n-1}$, so that $\gS$ is an $n$-manifold.  Let $D_0$ denote the boundary collar containing the vertex $e$.  Fix $s\in S'$ and let $D_2$ denote the boundary collar containing the vertex $u$, where $u\in W_{\{s,t\}}$ is $t$-even and has a reduced expression ending in $t$.  We study $D_0\cap D_2$.  

\begin{Lem}\label{l:W'-orbit} Let $W':=W_{U_{st}}$, where $U_{st}=\{r\in S\mid m_{rt}=m_{rs}=2\}$, and let $K'=K(U)\cap uK(U)$.  Denote by $W'K'$ the orbit of $K'$ under $W'$.  Then $D_{0}\cap D_{2} = W'K'$.
\end{Lem}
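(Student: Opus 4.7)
The plan is to prove $D_0 \cap D_2 = W'K'$ by establishing both containments.

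\textbf{Forward containment ($W'K' \subseteq D_0 \cap D_2$).} Every $r \in W' = W_{U_{st}}$ commutes with both $s$ and $t$, hence with $u \in W_{\{s,t\}}$, and lies in $W_{U-t}$ (since $U_{st} \subseteq U-t$). Therefore $rK(U) \subseteq D_0$ and $ruK(U) = urK(U) \subseteq D_2$, giving $rK' = rK(U) \cap ruK(U) \subseteq D_0 \cap D_2$. Taking the union over $r \in W'$ yields the inclusion.

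\textbf{Reverse containment, setup.} Let $x \in D_0 \cap D_2$ and choose $v \in W_{U-t}$, $h \in W_{U-t}$ with $v' := uh$ and $x \in vK(U) \cap v'K(U)$. The chamber intersection $vK \cap v'K$ equals $v|\cs_{\geq V}|$ where $V := S(v^{-1}v')$ must be spherical; since $u$ has a nonzero even number of $t$'s while $v, h$ are $t$-free, $t \in V$. The goal is to produce $r \in W'$ with $v|\cs_{\geq V}| \subseteq rK' = r|\cs_{\geq \{s,t\}}|$; this reduces to showing (i) $\{s,t\} \subseteq V$ and (ii) $v \in W' \cdot W_V$.

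\textbf{Combinatorial core.} For (i), the dihedral structure of $u \in W_{\{s,t\}}$ together with the $t$-freeness of $v, h$ forces $s$ to appear in the reduced form of $v^{-1}uh$: the $s$'s in $u$ are sandwiched between $t$'s and cannot be eliminated by external $s$-multiplications, which only shift $u$'s dihedral core among elements of $W_{\{s,t\}}$ of positive $s$-count. Hence $s \in V$, and Lemma \ref{l:srel} then gives $V \cap S' = \{s\}$ (otherwise a second element $s'' \in S'$ would give $\{s, s''\} \subseteq V$ with $m_{ss''} = \infty$, contradicting sphericalness); Corollary \ref{c:commute} then yields $V \subseteq U_{st} \cup \{s,t\}$, establishing (i). For (ii), I apply Lemma \ref{l:reduction}. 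Writing $uh = v\alpha$ with $\alpha = v^{-1}uh \in W_V$, choosing reduced expressions, and decomposing $\alpha = \alpha_0 \, t \, \alpha_2$ with $\alpha_0$ the maximal $t$-free initial segment, the identity $uh = v\alpha_0 \cdot t \cdot \alpha_2$ (in the case $u$ begins with $tst$) matches the lemma's hypothesis with the lemma's ``$w$'' being $v\alpha_0 \in W_{S-t}$ and the lemma's ``$v$'' being $\alpha_2$ (note $S(\alpha_2) \subseteq V \subseteq U_{st} \cup \{s,t\}$). The conclusion $S(v\alpha_0) \subseteq U_{st} \cup \{s\}$, combined with $\alpha_0 \in W_{V-t} \subseteq W_{U_{st} \cup \{s\}}$, forces $v \in W_{U_{st} \cup \{s\}} = W' \times \langle s \rangle$, so $v = rs^k$ with $r \in W'$, $k \in \{0,1\}$; since $s \in V$, $s^k \in W_V$, which gives (ii). The case where $u$ begins with $s$ is handled analogously by applying the lemma to $\alpha^{-1} = h^{-1}u^{-1}v$ since $u^{-1}$ then begins with $tst$.

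\textbf{Conclusion.} With $v = rw_0$, $r \in W'$, $w_0 \in W_V$, the key geometric fact is that $w_0$ fixes $|\cs_{\geq V}|$ pointwise: each vertex is the barycenter of a Coxeter cell of type $V' \supseteq V$, stabilized by $W_{V'} \supseteq W_V \ni w_0$, and an isometry fixing every vertex of a simplicial complex fixes the complex. Hence $v|\cs_{\geq V}| = r|\cs_{\geq V}| \subseteq r|\cs_{\geq \{s,t\}}| = rK'$, giving $x \in W'K'$. The main obstacle is the combinatorial core, especially the dihedral-based justification that $s \in V$ and the careful verification of the hypotheses of Lemma \ref{l:reduction} in both cases of $u$'s initial letter; both rely on fine control of reduced expressions in the even Coxeter setting.
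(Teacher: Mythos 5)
Your overall architecture coincides with the paper's: the forward inclusion via commutation of $W'$ with $u$, and the reverse inclusion by showing $\{s,t\}\subseteq V$, $V\subseteq U_{st}\cup\{s,t\}$, and then controlling $S(v)$ via Lemma \ref{l:reduction} so that $v$ lands in $W'\cdot W_V$. The one place you genuinely depart from the paper is in proving $s\in V$: the paper gets this from the coloring (comparing the $\{s,t\}$-coordinates of $c(u)$ and $c(v\alpha)$ in its Claim 1), whereas you argue directly with reduced expressions. Your "sandwiched between $t$'s" justification is informal but salvageable: apply the retraction homomorphism $g_{S,\{s,t\}}$ of Lemma \ref{l:T-hom} to $uh=v\alpha$ to get $g(\alpha)=s^{-a}us^{b}$, which cannot lie in $\{e,t\}$ since $u$ is $t$-even with $t\in S(u)$; hence $s\in S(g(\alpha))\subseteq S(\alpha)$.

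The real problems are in your use of Lemma \ref{l:reduction}. First, the lemma requires \emph{both} sides of $tstw'=wtv$ to be reduced, and the word $v\alpha_0\,t\,\alpha_2$ you feed into it need not be: one can have $\ell(v\alpha)<\ell(v)+\ell(\alpha)$ (already $v=r\in U_{st}$, $h=e$, $\alpha=rtst$ produces cancellation across your factorization). The paper routes around this by passing to an actual reduced expression $w''tv'$ of the product $v\alpha$ and applying the lemma to $w''$, then noting that $S(v)$ and $S(w'')$ differ only inside $U_{st}\cup\{s\}$; your argument needs that intermediate step. Second, your treatment of the case where $u$ begins with $s$ fails: putting $\alpha^{-1}=h^{-1}u^{-1}v$ into a form the lemma accepts means rewriting it as $u^{-1}v=h\alpha^{-1}$, and the lemma then bounds $S(h)$ rather than $S(v)$, which is the set you actually need to control (and $h^{-1}u^{-1}v$ itself does not begin with $tst$). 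The correct fix is the paper's: replace $u$ by $su$, so the equation becomes $(su)h=(sv)\alpha$ with left side beginning $tst\cdots$, and observe that the conclusion about $S(sv)$ differs from the one about $S(v)$ by at most $\{s\}$, which is harmless for the target containment $S(v)\subseteq U_{st}\cup\{s\}$.
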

\begin{proof} For any $w\in W'$, the vertex $w$ is in the same component of $\partial\gO$ as $e$ (by Remark \ref{r:trivial}), and therefore $wK(U)\subset D_0$.  $wu=uw$, so $wu$ is in the same component of $\partial\gO$ as $u$ and $wuK(U)\subset D_{2}$.  Thus $wK'=wK(U)\cap wuK(U)\subset D_{0}\cap D_{2}$.

Now let $\gs$ be a $0$-simplex in $D_{0}\cap D_{2}$.  Then there exist $w,w'\in W_{U-t}$ such that $\gs\in wK(U)\cap uw'K(U)$, i.e. $\gs$ is simultaneously the $w$- and $uw'$-translate of a $0$-simplex $\gs'$ in $K(U)$.  Let $V$ be the spherical subset to which $\gs'$ corresponds and let $v\in W_V$ be such that $uw'=wv$.  $c(e)=c(w)$ and $c(u)=c(uw')$, so $w$ and $uw'$ are differently colored even vertices of a Coxeter cell of type $V$.  By the second paragraph of \ref{ss:evenodd}, $\{s',t\}\subseteq S(v)\subseteq V$ for exactly one $s'\in S'$ and $v$ is $t$-even.  

\textbf{Claim 1}: $s'=s$.\\  
\textbf{Pf}: Since $w'\in W_{U-t}$, $c(u)=c(uw')=c(wv)$, i.e. $u$ and $wv$ act the same on every coordinate of $\bar{e}$.  Consider the $\{s,t\}$-coordinate.  $u\in W_{\{s,t\}}$ is $t$-even, so $u\cdot W_{s}=uW_{s}$ and $uW_{s}\neq W_{s}$.  But if $s\notin S(v)$, then $v$ being $t$-even and $w\in W_{U-t}$ imply that $wv\cdot W_{s}=W_{s}$; which contradicts $u$ and $wv$ having the same color.  So Claim 1 is true, and as a result $V\in\cs_{\geq\{s,t\}}$ and $\gs'\in K'$.  Moreover, by Corollary \ref{c:commute}, $V\subseteq U_{st}\cup \{s,t\}$.  It remains to show that $\gs$ is in the $W'$-orbit of $K'$.  

\textbf{Claim 2}: $S(w)\subseteq (U_{st}\cup \{s\})$.\\
\textbf{Pf}: Take a reduced expression for $u$ which ends in $t$.  If this expression begins with $s$, multiply $u$ on the left by $s$, so that we have $suw'=swv$.  The only change this can effect on $S(w)$ is either adding or subtracting an $s$, which is inconsequential to our claim.  So, we may assume that $u$ has a reduced expression of the form $tst\cdots st$ as described in Lemma \ref{l:XY-reduced}.  Hence, $u$ is $(U-t,U-t)$-reduced and $uw'$ has a reduced expression beginning with the subword $tst$.  $wv$ has a reduced expression of the form $w''tv'$ where $w''\in W_{U-t}$, $S(v')\subset U_{st}\cup\{s,t\}$ and where the difference between $S(w)$ and $S(w'')$ is contained in $U_{st}\cup\{s\}$.  Claim 2 then follows from Lemma \ref{l:reduction} applied to $w''$.  

We now finish the proof of Lemma \ref{l:W'-orbit}.  If $s\notin S(w)$, then $w\in W'$ and we are done since $\gs$ is the $w$-translate of $\gs'$.  If $s\in S(w)$, then $w$ may be written as $qs$, with $q\in W'$ and since $s\in V$, $qsW_{V}=qW_{V}$.  So $\gs$ is also the $q$-translate of $\gs'$.
\end{proof}

\begin{Proposition}\label{p:W'-Daviscpx} $(D_{0}\cap D_{2})\cong\gS(W',U_{st})$, an infinite connected $(n-2)$-manifold.
\end{Proposition}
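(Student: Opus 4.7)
The plan is to build on Lemma \ref{l:W'-orbit}, which already identifies $D_0\cap D_2$ with the $W'$-orbit $W'K'$, and then recognize the resulting space as the Davis complex of the Coxeter system $(W',U_{st})$. First I would examine the $0$-simplex set of $K'=K(U)\cap uK(U)$. From the analysis at the end of Lemma \ref{l:W'-orbit} (together with Corollary \ref{c:commute}), the $0$-simplices of $K'$ correspond exactly to spherical subsets $V\in\cs$ with $\{s,t\}\subseteq V\subseteq U_{st}\cup\{s,t\}$. The map $V\mapsto V':=V-\{s,t\}$ is a bijection onto the spherical subsets of $(W',U_{st})$, because every generator in $U_{st}$ commutes with both $s$ and $t$, so $W_V=W'_{V'}\times W_{\{s,t\}}$ and $W_V$ is finite iff $W'_{V'}$ is. This bijection is order-preserving, so it induces a simplicial isomorphism $K'\xrightarrow{\cong}|\cs(W',U_{st})|=K(W',U_{st})$, the fundamental chamber of $\gS(W',U_{st})$.

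Next I would upgrade this to a $W'$-equivariant isomorphism $W'K'\cong\gS(W',U_{st})$. The action of $W'=W_{U_{st}}$ on $W'K'\subseteq\gO$ is by left multiplication, and for $w,w'\in W'$ the intersection $wK'\cap w'K'$ is determined by $v=w^{-1}w'$ exactly as in the general description of how translates of $K$ meet in $\gS$ (Section \ref{s:davis}): at a $0$-simplex labeled by $V$, the two translates agree iff $v\in W_V$. Because $v\in W'$ and $V\supseteq\{s,t\}$, this is equivalent to $v\in W'_{V'}$, which is precisely the gluing rule for $\gS(W',U_{st})$. Pushing this through on all simplices yields the desired $W'$-equivariant simplicial isomorphism $W'K'\xrightarrow{\cong}\gS(W',U_{st})$.

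Finally I would read off the topological properties. The nerve of $(W',U_{st})$ is the full subcomplex of $L$ spanned by $U_{st}$; by the definition of $U_{st}$ and flagness of $L$, this coincides with $L_{st}$, the link of the edge $\{s,t\}$ in $L$. Since $L$ is a flag triangulation of $\BS^{n-1}$, $L_{st}$ is a flag triangulation of $\BS^{n-3}$. By Proposition \ref{p:coxeter}, $\gS(W',U_{st})$ is therefore a topological $(n-2)$-manifold; being the Davis complex of a Coxeter system, it is also contractible, hence connected and noncompact (for $n\geq 3$), i.e.\ infinite.

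The main obstacle I anticipate is the second step: verifying that the combinatorics of how $W'$-translates of $K'$ meet inside the ambient complex $\gO$ really does match, cell by cell, the combinatorics of how translates of the fundamental chamber meet in $\gS(W',U_{st})$. This requires carefully using that $v=w^{-1}w'\in W'$ together with the fact that $V$ always contains $\{s,t\}$, so that the compatibility condition in $(W,S)$ collapses to the compatibility condition in $(W',U_{st})$; the first and third steps are then essentially dictionary work.
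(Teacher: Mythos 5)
Your proposal is correct and follows essentially the same route as the paper: identify $K'$ with $|\cs_{\geq\{s,t\}}|$, use $V\mapsto V-\{s,t\}$ (via Corollary \ref{c:commute}) to match it with the fundamental chamber of $\gS(W',U_{st})$, extend $W'$-equivariantly over the orbit, and then invoke the identification of $L_{st}$ with the nerve of $(W',U_{st})$ together with Proposition \ref{p:coxeter} to get a contractible, hence infinite and connected, $(n-2)$-manifold. The extra care you take with the gluing combinatorics of the $W'$-translates, and your flagness-based identification of the nerve with $L_{st}$ (the paper instead uses the product decomposition $W_{T\cup\{s,t\}}=W_{T}\times W_{\{s,t\}}$), are harmless elaborations of the same argument.
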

\begin{proof} Since $S(u)=\{s,t\}$, $K'$ is the geometric realization of the poset 
$\cs_{\geq\{s,t\}}=\{V\in \cs|\{s,t\}\subseteq V\}.$  By Lemma \ref{l:W'-orbit}, $(D_{0}\cap D_{2})\cong |W'\cs_{\geq\{s,t\}}|$, and by Corollary \ref{c:commute}, $\cs_{\geq\{s,t\}}$ is isomorphic to $\cs(U_{st})$ via the map $T\rightarrow T-\{s,t\}$.  So $(D_{0}\cap D_{2})\cong |W'\cs(U_{st})|=\gS(W',U_{st})$. 

Recall that $L_{st}$ denotes the link in $L$ of the edge connecting $s$ and $t$.  Simplices in $L_{st}$ correspond to spherical subsets $T\in\cs$ such that neither $s$ nor $t$ is contained in $T$ but $T\cup\{s,t\}\in \cs$.  So by Corollary \ref{c:commute}, the vertex set of a simplex of $L_{st}$ corresponds to a spherical subset of $\cs(U_{st})$.  Conversely, given a spherical subset $T\in\cs(U_{st})$, $W_{T\cup\{s,t\}}=W_{T}\times W_{\{s,t\}}$, which is finite.  So $T$ corresponds to a simplex of $L_{st}$.  Thus, $L_{st}$ is the nerve of the Coxeter system $(W',U_{st})$.  Since $L$ triangulates $\BS^{n-1}$, $L_{st}$ triangulates $\BS^{n-3}$.  It follows from Proposition \ref{p:coxeter} that $\gS(W',U_{st})$ is a contractible $(n-2)$-manifold.
\end{proof} 

\begin{Cor}\label{c:2evens} Let $c,c'\in A$ be even.  Then $(F_c\cap F_{c'})$ is a disjoint union of infinite $(n-2)$-manifolds.
\end{Cor}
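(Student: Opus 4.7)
The plan is to reduce each non-empty intersection of collars to the setting of Proposition \ref{p:W'-Daviscpx} via the $W_U$-action on $\gO$; the only nontrivial case is $c \neq c'$. First I decompose
\[
F_c \cap F_{c'} \;=\; \bigsqcup_{D \subset F_c,\; D' \subset F_{c'}} (D \cap D'),
\]
where $D, D'$ range over the constituent boundary collars. This union is manifestly disjoint, since distinct boundary collars of the same color are disjoint by Lemma \ref{l:samecolordisjoint}; hence any point of $F_c \cap F_{c'}$ lies in a unique $D \subset F_c$ and a unique $D' \subset F_{c'}$.

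For each non-empty $D \cap D'$, I pick even vertices $v \in D$ and $v' \in D'$ and translate by $v^{-1}$, a homeomorphism of $\gO$ that sends $D$ to $D_0$. The task reduces to identifying $v^{-1}D'$ with the specific collar $D_2$ of Proposition \ref{p:W'-Daviscpx}. Since $v$ and $v'$ lie in a common cell (any cell containing a $0$-simplex of $D \cap D'$), the analysis in the opening paragraphs of Section \ref{ss:evenodd} shows this cell has type $T$ with $\{s,t\} \subseteq T$ for exactly one $s \in S'$. Corollary \ref{c:commute} then gives $T \subseteq U_{st} \cup \{s,t\}$, and the parabolic $W_T$ splits as a direct product $W_{T \cap U_{st}} \times W_{T \cap \{s,t\}}$. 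Consequently $v^{-1}v' \in W_T$ decomposes as $hq$ with $h \in W_{U_{st}} \subseteq W_{U-t}$ and $q \in W_{\{s,t\}}$, so $v^{-1}v'W_{U-t} = qW_{U-t}$. The element $q$ is $t$-even (inherited from $v^{-1}v'$) and has both $s$ and $t$ in its support; replacing $q$ with $qs$ if necessary (still in the same coset, as $s \in W_{U-t}$), I may assume $q$ ends in $t$. Setting $u := q$, Proposition \ref{p:W'-Daviscpx} gives $D_0 \cap v^{-1}D' = D_0 \cap D_2 \cong \gS(W', U_{st})$, an infinite connected $(n-2)$-manifold. Translating back by $v$, so is $D \cap D'$.

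The main obstacle is the direct-product decomposition of $v^{-1}v'$: extracting a representative in $W_{\{s,t\}}$ of the coset $v^{-1}v'W_{U-t}$ that ends in $t$. This turns on identifying the unique $s \in S'$ controlled by the common cell and then exploiting the product structure of $W_T$ supplied by Corollary \ref{c:commute}. Once that decomposition is in hand, Proposition \ref{p:W'-Daviscpx} applies verbatim and delivers the conclusion.
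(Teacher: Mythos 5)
Your proof is correct and follows essentially the same route as the paper's: reduce a nonempty intersection of distinct even boundary collars to the model case of Proposition \ref{p:W'-Daviscpx} by factoring $v^{-1}v'$ through the direct-product decomposition of $W_T$ supplied by Corollary \ref{c:commute}, adjusting the $W_{\{s,t\}}$-factor by $s$ so that it ends in $t$, and translating by $v^{-1}$. Your version is, if anything, slightly more explicit than the paper's about the disjointness of the pieces $D\cap D'$ and about the identification $v^{-1}D'=D_2$, but the underlying argument is the same.
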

\begin{proof} Suppose that $F_c\neq F_{c'}$ are both even collars and $F_c\cap F_{c'}\neq\emptyset$.  Then there exist even vertices $v$ and $v'$ with $vK(U)\cap v'K(U)\neq\emptyset$.  Let $w=v^{-1}v'$ and put $T=S(v^{-1}v')$.  $T$ is a spherical subset, and $v$ and $v'$ are both vertices of a cell of type $T$.  So we have exactly one $s\in S'$ with $\{s,t\}\subseteq T$.  Factor $w$ as $w=xq$ where $x\in W_{\{s,t\}}$ is $t$-even and $q\in W_{T-\{s,t\}}$.  Now, $x$ may not have a reduced expression ending in $t$.  If it does not, then $xs$ does and it is in the same boundary collar as $x$ and $w$.  So let 
\[
u=
\begin{cases}
	x & \text{ if $x$ has a reduced expression ending in $t$},\\
	xs & \text{ otherwise}.
\end{cases}
\]
Then $vK(U)\cap v'K(U)\subseteq vK(U)\cap vuK(U)$.  Act on the left by $v^{-1}$ and we are in the situation studied in Lemma \ref{l:W'-orbit} and Proposition \ref{p:W'-Daviscpx}.  So $F_c\cap F_{c'}$ is the disjoint union of infinite connected $2$-manifolds.
\end{proof}

\begin{Rem}\label{r:oneevencolor} If $W$ is right-angled, or if $S'=\emptyset$, then $W_U=W_{U-t}\times W_t$ and there is one even and one odd collar.  This is why all the effort on the colors....because the ruins have branching points.
\end{Rem}

\paragraph{Multiple even collars.}  Suppose that $D_{1}, D_{2},\ldots,D_{n}, D_{e}$ are even boundary collars.  Then 
\begin{equation*}
D_{e}\cap \left(\bigcup^{n}_{j=1} D_{j}\right)= (D_{e}\cap D_{1})\cup\cdots\cup (D_{e}\cap D_{n}),
\end{equation*}
and suppose that for some $1\leq i< k\leq n$ we have that $(D_{e}\cap D_{i})$ and $(D_{e}\cap D_{k})$ are not disjoint.  Let $\gs$ be a $0$-simplex contained in $D_{e}\cap D_{i}\cap D_{k}$ corresponding to a coset of the form $vW_{T}$.  Then there exists $w,w'\in W_{T}$ such that $v\in D_{e}$, $vw\in D_{i}$, $vw'\in D_{k}$ and $\gs\in vK(U)\cap vwK(U)\cap vw'K(U)$.  These three vertices are differently colored even vertices of a cell of type $T$, so $\{s,t\}\subseteq T$ for exactly one $s\in S'$ and both $w$ and $w'$ are $t$-even.  Then, as in the proof of Corollary \ref{c:2evens}, it follows that $D_{e}\cap D_{i}=D_{e}\cap D_{k}\cong |W'\cs_{\geq\{s,t\}}|$.  As a result, Corollary \ref{c:2evens} generalizes to the following: 

\begin{Cor}\label{c:multevens} Let $F_{c_1}$, $F_{c_2},\ldots, F_{c_n}, F_{c_e}$ be even collars.  Then 
\begin{equation*}
 \left(F_{c_e}\cap \left(\bigcup^{n}_{j=1} F_{c_j}\right)\right)
\end{equation*}
is a disjoint collection of infinite $(n-2)$-manifolds.
\end{Cor}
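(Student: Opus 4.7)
The plan is to distribute the intersection across the union, apply Corollary \ref{c:2evens} to each pairwise term, and then check that components arising from distinct pairwise intersections either coincide or are disjoint. Distributivity yields
\[ F_{c_e} \cap \left(\bigcup_{j=1}^n F_{c_j}\right) = \bigcup_{j=1}^n \left(F_{c_e} \cap F_{c_j}\right), \]
and Corollary \ref{c:2evens} already guarantees that each summand is a disjoint union of infinite connected $(n-2)$-manifolds. So the only thing left is to rule out partial overlaps between components from different values of $j$.

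The substantive step is to show that, for $i \neq k$, any component of $F_{c_e} \cap F_{c_i}$ that meets a component of $F_{c_e} \cap F_{c_k}$ must equal it. To do so, I would take a $0$-simplex $\gs$ in such an overlap. By Lemma \ref{l:samecolordisjoint}, $\gs$ lies in a unique boundary collar of each of the three colors, say $D_e \subseteq F_{c_e}$, $D_i \subseteq F_{c_i}$ and $D_k \subseteq F_{c_k}$. Let $vW_T$ be the spherical coset at $\gs$ and pick $w, w' \in W_T$ with $v \in D_e$, $vw \in D_i$ and $vw' \in D_k$. Then $v, vw, vw'$ are three distinctly colored even vertices of a single Coxeter cell of type $T$, so, just as in the paragraph immediately preceding the corollary, Corollary \ref{c:commute} forces $\{s,t\} \subseteq T$ for a unique $s \in S'$ and both $w$ and $w'$ to be $t$-even.

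Left-translating by $v^{-1}$ puts us in the set-up of Lemma \ref{l:W'-orbit} and Proposition \ref{p:W'-Daviscpx}, applied once with $u \in W_{\{s,t\}}$ taken to be $w$ (or $ws$, whichever has a reduced expression ending in $t$) for the pair $(D_e, D_i)$, and once analogously for the pair $(D_e, D_k)$. Both intersections therefore equal the $v$-translate of $W'K' \cong \gS(W', U_{st})$, a single infinite connected $(n-2)$-manifold containing $\gs$. Hence the components of $F_{c_e} \cap F_{c_i}$ and $F_{c_e} \cap F_{c_k}$ through $\gs$ coincide, which is exactly what is needed for the total intersection to be a \emph{disjoint} union of infinite $(n-2)$-manifolds.

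The main obstacle I anticipate is bookkeeping rather than a new idea: each $c$-collar is itself a disjoint union of several boundary collars, so the argument must first descend from $c$-collars to individual boundary collars via Lemma \ref{l:samecolordisjoint}, and it must handle the nuisance that the unique reduced expression for $w$ (resp.\ $w'$) in $W_{\{s,t\}}$ may fail to end in $t$ -- precisely the situation dealt with by the $xs$ substitution in the proof of Corollary \ref{c:2evens}. Once those two book-keeping points are managed, the conclusion reduces almost verbatim to the pairwise case already established.
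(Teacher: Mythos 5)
Your proposal is correct and follows essentially the same route as the paper: distribute the intersection over the union, take a $0$-simplex in the overlap of two pairwise intersections, observe that the three differently colored even vertices of the cell of type $T$ force a unique $s\in S'$ with $\{s,t\}\subseteq T$, and then invoke the argument of Corollary \ref{c:2evens} (via Lemma \ref{l:W'-orbit} and Proposition \ref{p:W'-Daviscpx}) to conclude the two pairwise intersections coincide. The paper's own proof is the paragraph immediately preceding the corollary and is, modulo your extra bookkeeping about descending from $c$-collars to boundary collars, the same argument.
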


\paragraph{Odd collars.}  We now consider how the odd collars intersect with the entire collection of even collars.

\begin{Lem}\label{l:oneodd} Define   
\[\partial_{in}(F_{c}):=\coprod_{D\subset F_{c}}\partial_{in}(D).
\]
Let $\cf_{E}$ denote the union of all even collars and let $F_o$ be an odd collar, then $F_{o}\cap \cf_{E}=\partial_{in}(F_{o})$. 
\end{Lem}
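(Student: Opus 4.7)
The plan is to prove $F_{o}\cap \cf_{E}=\partial_{in}(F_{o})$ by establishing both containments, in each case leveraging the parity homomorphism $g_{UT}:W_U\to W_{\{t\}}$ of Lemma \ref{l:T-hom} (taking $T=\{t\}$), which classifies vertices of $\gO$ as even or odd. The governing structural fact is that two distinct boundary collars, being built over disjoint components of $\partial\gO$, can meet only in cells whose type contains $t$, i.e., only in their inner portions.

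For the inclusion $F_{o}\cap \cf_{E}\subseteq \partial_{in}(F_{o})$, let $y\in F_{o}\cap \cf_{E}$, so $y\in D\cap D'$ for some odd collar $D\subseteq F_o$ over a component $B$ of $\partial\gO$ and some even collar $D'$ over a component $B'$. Since $D=\bigcup_{w\in B}wK(U)$ and analogously for $D'$, I would choose an odd vertex $w\in B$ and an even vertex $v\in B'$ with $y\in wK(U)\cap vK(U)$. By the intersection formula for $K$-translates in Section \ref{s:davis}, this intersection is simplicially isomorphic to $|\cs_{\geq V}|$, where $V:=S(w^{-1}v)\in \cs(U)$. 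Applying $g_{UT}$ yields
\[
g_{UT}(w^{-1}v)=g_{UT}(w)^{-1}\,g_{UT}(v)=t\cdot e=t,
\]
so the number of $t$'s in any reduced expression for $w^{-1}v$ is odd, hence positive, forcing $t\in V$. Thus $y$ lies in cells of type $T'\supseteq V\ni t$, which by definition sit inside $\partial_{in}(D)\subseteq \partial_{in}(F_o)$.

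For the reverse inclusion $\partial_{in}(F_{o})\subseteq F_{o}\cap \cf_{E}$, let $y\in \partial_{in}(D)$ for $D\subseteq F_o$ over an odd component $B$. A simplex of $\partial_{in}(D)$ containing $y$ corresponds to a chain $wW_{V_0}\subsetneq\cdots\subsetneq wW_{V_k}$ with $w\in B$ odd and each $V_i\in \cs(U)$ containing $t$. Consider the vertex $wt$: since $g_{UT}(wt)=t\cdot t=e$, $wt$ is even, so it lies in some even component $B'$ of $\partial\gO$ with associated even collar $D'$. Because $t\in V_i$ implies $t\in W_{V_i}$, one has $wW_{V_i}=wtW_{V_i}$ for every $i$; hence the same simplex is the chain $wtW_{V_0}\subsetneq\cdots\subsetneq wtW_{V_k}$ in $(wt)K(U)\subseteq D'$, so $y\in D\cap D'\subseteq F_o\cap \cf_E$.

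I expect the main hurdle to be the bookkeeping in the second inclusion: one must confirm that an \emph{entire} simplex of $\partial_{in}(D)$, not merely its $0$-skeleton, transfers into $(wt)K(U)$ under the coset identifications $wW_V=wtW_V$ for $V\ni t$. This is resolved by observing that the hypothesis $t\in V$ applies uniformly to every coset appearing in the chain, so all the identifications can be made simultaneously. With that in place, both inclusions reduce to the single parity computation with $g_{UT}$.
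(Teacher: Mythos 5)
Your proof is correct and takes essentially the same route as the paper's: both inclusions rest on the parity of the number of $t$'s in $w^{-1}v$ (forcing $t\in V$ when an odd and an even vertex share a cell of type $V$) and on the identity $wW_V=wtW_V$ when $t\in V$. Your extra bookkeeping for carrying whole simplices, rather than just $0$-simplices, through the coset identifications is a minor refinement of the argument the paper already gives.
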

\begin{proof} Since $F_{o}$ is a disjoint union of boundary collars, it suffices to show that $D\cap \cf_{E}=\partial_{in}(D)$ for some boundary collar $D\subset F_{o}$.

($\supseteq$): Let $\gs$ be a $0$-simplex in $\partial_{in}(D)$.  Then $\gs$ corresponds to a coset of the form $wW_{V}$ where $V\in \cs_{\geq t}$ and $w\in W_{U}$ is an odd vertex of $D$.  Consider the even vertex $wt$.  Then since $t\in V$, $wW_{V}=wtW_{V}$, and $\gs\in wtK(U)\subset \cf_{E}$.

($\subseteq$): Now suppose that $\gs$ is a $0$-simplex contained in $D\cap \cf_{E}$.  Then there exists a spherical subset $V$ and cosets $wW_V=w'W_V$ where $w$ is odd and $w'$ is even.  Let $v=w^{-1}w'$.  Since $w$ is odd and $w'$ is even, $v$ must contain an odd number of $t$'s in any of its reduced expressions.  Therefore $t\in V$ and $\gs\in\partial_{in}(D)$.
\end{proof}

As before, let $\cf_{E}$ denote the union of all even collars, and now let $\cf_{O}$ denote the union of a sub-collection of odd collars.  Let $\cf_{E'}=\cf_{E}\cup \cf_{O}$ and let $F_{o}$ be an odd collar not included in $\cf_{O}$.  Then by Lemma \ref{l:oneodd}, 
\begin{equation*}
	F_{o}\cap \cf_{E'}= (F_{o}\cap \cf_{E})\bigcup (F_{o}\cap \cf_{O})=\partial_{in}(F_{o})\bigcup (F_{o}\cap \cf_{O}).
\end{equation*}
Any $0$-simplex in $F_o$ which is also in a different collar must be of the form $wW_V$, where $w$ is a vertex of $F_o$ and $V\in\cs_{\geq t}$.  Therefore $(F_{o}\cap \cf_{O})\subset \partial_{in}(F_{o})$ and $F_{o}\cap \cf_{E'}=\partial_{in}(F_{o})$.  

It is clear from the product structure on boundary collars that $\partial_{in}(F_o)\cong F_o\cap\partial\gO$, the latter a disjoint collection of components of $\partial\gO$.  Since $L$ is flag, we have a 1-1 correspondence between Coxeter cells of any component of $\partial\gO$ and cells of $\gS(W_{U-t},U-t)_{cc}$.  Denote by $L_t$ the link in $L$ of the vertex corresponding to $t$, it is a triangulation of $\BS^{n-2}$ and it is isomorphic to the nerve of $(W_{U-t},U-t)$.  So we have the following corollary.
\begin{Cor}\label{c:oddn-1} Let $\cf_{E'}$ and $F_o$ be as above.  Then $F_o\cap\cf_{E'}$ is a disjoint collection of $(n-1)$-manifolds.
\end{Cor}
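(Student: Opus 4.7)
The plan is to unwind the identifications set up in the paragraph preceding the statement and then invoke Proposition \ref{p:coxeter}. That paragraph already shows $F_o\cap\cf_{E'}=\partial_{in}(F_o)$, so the only remaining task is to recognize $\partial_{in}(F_o)$ as a disjoint collection of $(n-1)$-manifolds.

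First I would use the product structure on boundary collars: each $D\subset F_o$ has the form $B\times[0,1]$ with $\partial_{in}(D)$ the ``inner'' end of the product, so $\partial_{in}(F_o)$ is homeomorphic to the disjoint union $F_o\cap\partial\gO$ of those components of $\partial\gO$ belonging to $F_o$. Distinct components of $\partial\gO$ are disjoint by construction, so it suffices to show a single component of $\partial\gO$ is an $(n-1)$-manifold.

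Second, I would use the parametrization of components of $\partial\gO$ by cosets in $W_U/W_{U-t}$ from \ref{sss:1letter}: translating by a representative puts a fixed component into ``standard position'' at the identity, where it becomes $W_{U-t}$-invariant. I would then identify this standard component cellwise with $\gS(W_{U-t},U-t)_{cc}$. The key point is flagness of $L$: a subset $T\subseteq U-t$ is spherical in $S$ if and only if $T$ spans a simplex of $L_t$, so the cells of type $T\in\cs$ with $t\notin T$ making up the component are exactly the cells of $\gS(W_{U-t},U-t)_{cc}$, and the identification is $W_{U-t}$-equivariant.

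Finally, the paragraph preceding the corollary records that $L_t$ is the nerve of $(W_{U-t},U-t)$ and, since $L$ triangulates $\BS^{n-1}$, $L_t$ triangulates $\BS^{n-2}$. Proposition \ref{p:coxeter} then yields that $\gS(W_{U-t},U-t)$ is a topological $(n-1)$-manifold. The main obstacle is not substantive -- all the ingredients are in place -- but the step that deserves the most care is the cellular identification in step two, specifically checking that flagness of $L$ genuinely supplies the equivalence between spherical subsets $T\subseteq U-t$ and simplices of $L_t$, so that the induced cell structure on a component of $\partial\gO$ matches the Davis cell structure on $\gS(W_{U-t},U-t)_{cc}$ rather than sitting inside it as a proper subcomplex.
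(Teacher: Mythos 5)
Your proposal is correct and follows essentially the same route as the paper: the preceding paragraph's reduction $F_o\cap\cf_{E'}=\partial_{in}(F_o)$, the product-structure identification $\partial_{in}(F_o)\cong F_o\cap\partial\gO$, the flagness-based cellwise identification of a component of $\partial\gO$ with $\gS(W_{U-t},U-t)_{cc}$, and the conclusion via Proposition \ref{p:coxeter} since $L_t$ triangulates $\BS^{n-2}$. Your added care about why flagness makes the component all of $\gS(W_{U-t},U-t)_{cc}$ rather than a proper subcomplex is exactly the right point to check and is handled correctly.
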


\begin{Ex}\label{ex:dim2} The following is representative of our situation.  Suppose $L=\BS^1$, and $U=\{t,r,s\mid (rt)^2=1, (st)^4=1\}$.  $\gO$ is represented in Figure \ref{fig:ruincolors9}.  The black dots represent the vertices of the Coxeter cellulation, with the vertices $e$ and $tst$ labeled.  The even collars are shaded.  Even boundary collars intersect in a $0$-simplex corresponding to the spherical subset $\{s,t\}$.  The intersection of one odd collar and all evens is the inner boundary of the odd collar.  
\end{Ex} 
\begin{figure}[h]
	\centering
		\includegraphics[width=9cm]{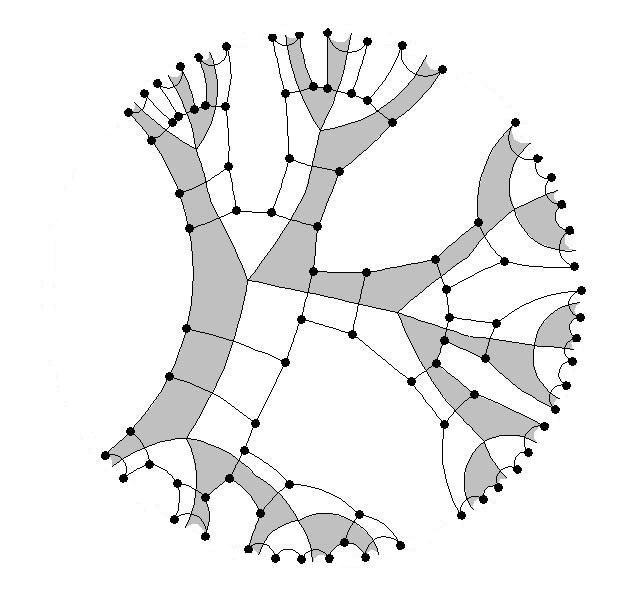}
	\caption{Even and Odd Colors of $\gO$}
	\label{fig:ruincolors9}
\end{figure}

\subsection{Inductive arguments in the case $(W,S)$ is even}\label{ss:evenconj}
Consider the following restatements of conjectures $\I(n)$, $\II(n)$ and $\V(n)$, each in the case that $(W,S)$ is \emph{even} and $L$ is a \emph{flag} triangulation of $\BS^{n-1}$.  (Here the ``E'' stands for even, the ``F'' for flag.)

\begin{EFI(n)}\label{conj:efn} Let $(W,S)$ be an even Coxeter system whose nerve, $L$ is a flag triangulation of $\BS^{n-1}$.  Then 
\[\cH_{i}(\gS)=0 \text{ for } i\neq \frac{n}{2}.\]
\end{EFI(n)}

\begin{EFII(n)}\label{conj:1efruin} Let $(W,S)$ be an even Coxeter system whose nerve $L$ is a flag triangulation of $\BS^{n-1}$ and let $t\in S$.  Then $\cH_{i}(\gO(S,t),\partial\gO(S,t))=0$ for $i>\frac{n}{2}$.
\end{EFII(n)}

\begin{EFV(n)}\label{conj:1efruingen} Let $(W,S)$ be an even Coxeter system whose nerve $L$ is a flag triangulation of $\BS^{n-1}$.  Let $V\subseteq S$ and $t\in V$.  Then $\cH_{i}(\gO(V,t),\partial\gO(V,t))=0$ for $i>\frac{n}{2}$.
\end{EFV(n)}

\begin{EFTR(n)}\label{conj:eftrn}  Let $(W,S)$ be an even Coxeter system with nerve $L$ a flag triangulation of $\BS^{n-1}$.  Let $V\subseteq S$ and let $T\subseteq V$ be a spherical subset with $\Card(T)=2$.  Then $\cH_i(\gO(V,T),\partial\gO(V,T))=0$ for $i>\frac{n}{2}+1$.
\end{EFTR(n)}

A version of $\EFTR(4)$ is proven in \cite{schroedereven}, which only requires showing the top dimensional $\ell^2$-homology vanishes and the proof of which only requires the nerve $L$ being flag.  So, the proof given in \cite{schroedereven} does generalize to the following ``top-dimensional only'' version of $\EFTR(n)$.   Note that the following shows that $\EFTR(3)$ is true, and in fact, we can also drop the even hypothesis.

\begin{Proposition}\label{p:2ruintop} Let $n\geq 3$ and $(W,S)$ be a Coxeter system whose nerve $L$ is flag triangulation of $\BS^{n-1}$.  Then $\cH_n(\gO(V,T),\partial\gO(V,T))=0$.
\end{Proposition}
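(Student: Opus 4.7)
The proof adapts the top-dimensional argument used in \cite{schroedereven} for the case $n = 4$. Suppose $z = \sum_c \lambda_c [c]$ is an $\ltwo$-cycle in $C_n(\gO(V,T),\partial\gO(V,T))$, where the sum runs over the top-dimensional Coxeter $n$-cells $c = wW_{T'}$ of $\gO(V,T)$; these are precisely the cells with $T \subseteq T' \subseteq V$ and $\Card(T') = n$. Since $L$ is a triangulation of $\BS^{n-1}$ and $T$ is an edge, $T' - T$ ranges over top-dim simplices of the link $L_T := \Lk(T, L)$, which is itself a flag triangulated $(n-3)$-sphere.

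Passing to the relative complex, the codim-one faces of a top cell $wW_{T'}$ that survive in the quotient are $wW_{T' - \{r\}}$ for $r \in T' - T$ (the ones with $r \in T$ lie in $\partial\gO(V,T)$). Since $L$ is an $(n-1)$-manifold, each surviving face $wW_{T''}$ borders exactly two top cells of $\gS(V)$, corresponding to the two vertices of $\Lk(T'', L) \cong \BS^0$; within $\gO(V,T)$ it borders one or two of these, depending on whether both those vertices lie in $V$. The condition $\partial z = 0$ therefore forces, at each surviving face, either $\pm \lambda_{c_1} \pm \lambda_{c_2} = 0$ or $\lambda_{c_1} = 0$. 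Form the dual graph $\Gamma$ whose vertices are the top cells of $\gO(V,T)$ and whose edges correspond to surviving two-sided faces; the cycle equation implies $|\lambda|$ is constant on each connected component of $\Gamma$, and this constant is forced to zero as soon as some cell in the component is adjacent to a one-sided surviving face.

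It then suffices to show that every connected component of $\Gamma$ is infinite. The case $W_V$ finite reduces to a routine computation of the top-dim relative homology of a Coxeter $n$-cell $D^n$ modulo part of its boundary, which vanishes; when $W_V$ is infinite, the $\ltwo$ condition on a component of constant $|\lambda|$ forces $\lambda = 0$. The infinite-component claim is the technical heart of the argument. It combines (a) connectivity of the flip graph on top simplices of the flag sphere $L_T$, a standard property of triangulated manifolds, which moves between types $T'$ across surviving faces; (b) the multiplicity of codim-one faces of type $T' - \{r\}$ on a given top cell, providing $|W_{T'}/W_{T'-\{r\}}|$ distinct flip neighbors and hence nontrivial monodromy along flip loops in $L_T$; and (c) the flagness of $L$, which ensures that $L_T$ is itself flag and that the parabolic subgroups $W_{T'-\{r\}}$ with $r \in T' - T$ generate a subgroup of $W_V$ large enough to produce infinitely many distinct $W_{T'}$-cosets in the component of any given top cell. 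The main obstacle is to verify this last assertion cleanly: one must exhibit flip loops whose monodromy products exhaust infinitely many cosets, mirroring and generalizing the combinatorial argument carried out for $n = 4$ in \cite{schroedereven}.
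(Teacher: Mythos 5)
Your setup is sound and matches the paper's framing: a relative $n$-cycle must have constant absolute value on top cells joined across codimension-one faces that survive in the quotient (those of type $R\in\cs(V)^{(n-1)}_{>T}$), and it must vanish on any cell with a free surviving face. But the step you yourself flag as "the technical heart" --- showing the relevant components are infinite --- is exactly where your argument has a genuine gap, and the machinery you propose (flip-graph connectivity of $L_T$, monodromy along flip loops, a claim that the parabolics $W_{T'-\{r\}}$ generate something "large enough") is both incomplete as you admit and unnecessary. The missing idea is a single local observation: if two $n$-cells of types $T'=R\cup\{r\}$ and $T''=R\cup\{s\}$ share a surviving face of type $R$, then $R\cup\{r,s\}$ has cardinality $n+1$, so it cannot span a simplex of the $(n-1)$-dimensional complex $L$; since every pair of vertices in $R\cup\{r,s\}$ other than $\{r,s\}$ is already an edge of $L$, flagness forces $m_{rs}=\infty$. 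This immediately produces an explicit infinite strip of pairwise adjacent $n$-cells
\[
W_{T'},\; W_{T''},\; sW_{T'},\; srW_{T''},\; srsW_{T'},\;\ldots
\]
all lying in $\gO(V,T)$ and all glued along surviving faces of type $R$, so square-summability kills the common coefficient. No global connectivity of the dual graph, no monodromy, and no case split on whether $W_V$ is finite is needed: every surviving two-sided adjacency already sits inside an infinite strip, and any cell all of whose surviving faces are free (which includes the compact case) is handled by the free-face observation, since $\Card(T)=2<n$ guarantees such a cell has at least one face not in $\partial\gO(V,T)$.

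As written, your proposal would stall precisely at item (c): it is not true in general that the subgroup generated by the $W_{T'-\{r\}}$ produces infinitely many cosets via "flip loops" for a reason you could verify cleanly without the $m_{rs}=\infty$ observation, and the reduction to connectivity of the flip graph of $L_T$ points the argument in the wrong direction (you do not need to connect cells of different types globally; you need infinitude locally along one adjacency). With the flag-plus-dimension observation inserted, the rest of your outline collapses to the paper's short proof.
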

\begin{proof} If $\cs(V)^{(n)}_{>T}=\emptyset$, then $\gO(V,T)$ does not contain $n$-dimensional cells, and we are done.  So assume that $\cs(V)^{(n)}_{>T}\neq\emptyset$.  The codimension 1 faces of $n$-cells of $\gO(V,T)$ are either faces of one other $n$-cell in $\gO(V,T)$ ($\gS$ is an $n$-manifold), or they are free faces, i.e they are not faces of any other $n$-cell in $\gO(V,T)$.

Suppose that cells of type $T'\in\cs(V)^{(n)}_{>T}$ have a co-dimension one face of type $R$ which is a face of another $n$-cell in $\gO(V,T)$ of type $T''$.  Then any relative $n$-cycle must be constant on adjacent cells of type $T'$ and $T''$, where $T'=R\cup \{r\}$, and $T''=R\cup \{s\}$, $R\in\cs(V)_{>T}^{(n-1)}$ and $r,s\in V$.  Since $L$ is flag and $(n-1)$-dimensional, $m_{rs}=\infty$.  So in this case, there is a sequence of adjacent $n$-cells with vertex sets $W_{T'},W_{T''},sW_{T'},srW_{T''},srsW_{T'},srsrW_{T''},\ldots$.  Hence, this constant must be $0$.  

Now suppose that for a given $n$-cell of $\gO(V,T)$, every co-dimension one face is free.  This cell has faces not contained in $\partial\gO(V,T)$, so relative $n$-cycles cannot be supported on this cell.  
\end{proof}

\begin{Rem}\label{r:eftr}
Note the importance of $\EFTR(n)$ to this program, as you see in Theorem \ref{t:newmain} below.  If a generalized version of this could be proved, then the program would move forward to prove further cases of Conjecture \ref{conj:singerc}.
\end{Rem}

\subsection{Inductive Arguments}
We now generalize the steps used in \cite{schroedereven}, presenting inductive arguments on the painted Davis Complex as a partially successful program to prove $\EFI(n)$.  Since the proofs of Proposition \ref{p:one-letter} and Theorem \ref{t:Singer} do not depend on the even nor odd hypotheses, the same proofs give us the following.
\subsubsection{}\label{sss:evenone-letter} $\left[\EFII(n) \text{ and } \EFTR(n)\right]\Longrightarrow \EFV(n)$.

\subsubsection{}\label{sss:evenSinger} $\EFV(n)\Longrightarrow \EFI(n)$.

\begin{Proposition}\label{p:even1ruin} For $k\in\BZ$, $\left[\EFI(2k-2) \text{ and } \EFI(2k-1)\right]\Longrightarrow \EFII(2k)$.
\end{Proposition}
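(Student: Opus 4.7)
The plan is to exploit the painted collar decomposition of $\gO$ from Section \ref{ss:evenodd} together with an inductive Mayer--Vietoris argument, using $\EFI(2k-1)$ to control the $\ltwo$-homology on the boundary and on individual collars, and $\EFI(2k-2)$ to control the homology of even--even collar intersections.

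First, I would observe that each path-component $B$ of $\partial\gO$ is the Davis complex $\gS(W_{U-t},U-t)$ under its Coxeter cellulation, and its nerve is the link $L_t$ of $t$ in $L$. Since $L$ is a flag triangulation of $\BS^{2k-1}$, $L_t$ is a flag triangulation of $\BS^{2k-2}$, so $\EFI(2k-1)$ gives $\cH_i(B)=0$ for $i\neq(2k-1)/2$; as this half-integer is not an integer, $\cH_i(B)$ vanishes for every $i$. Since $\partial\gO$ is $W_U$-equivariantly $W_U\times_{W_{U-t}}B$, this forces $\cH_i(\partial\gO)=0$ for all $i$, and the weakly exact sequence of the pair $(\gO,\partial\gO)$ reduces the claim to showing $\cH_i(\gO)=0$ for $i>k$. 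The same vanishing applies to each $c$-collar $F_c$: it is a disjoint union of boundary collars $D\cong B\times[0,1]$ that deformation retract onto components of $\partial\gO$, so $\cH_i(F_c)=0$ for every $i$ and every color $c$.

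Next, I would analyze the intersections of collars. For two distinct even colors $c,c'$, Corollary \ref{c:2evens} together with Proposition \ref{p:W'-Daviscpx} identifies $F_c\cap F_{c'}$ with a disjoint union of copies of $\gS(W',U_{st})$, whose nerve is $L_{st}$, a flag triangulation of $\BS^{2k-3}$. By $\EFI(2k-2)$, the $\ltwo$-homology of each such component vanishes outside degree $k-1$, and hence in every degree $\geq k$. Corollary \ref{c:multevens} ensures that when one even collar meets the union of several others the intersection still decomposes as a disjoint union of such $(2k-2)$-manifolds. For an odd collar $F_c$ meeting any union of other collars, Lemma \ref{l:oneodd} (and the extension preceding Corollary \ref{c:oddn-1}) identifies the intersection with $\partial_{in}(F_c)\cong F_c\cap\partial\gO$, a disjoint union of components of $\partial\gO$, whose $\ltwo$-homology vanishes by the argument of the first paragraph.

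Finally, I would enumerate the finitely many colors as $c_1,\ldots,c_N$, placing the even colors first, and induct on $j$ to show $\cH_i(X_j)=0$ for $i>k$, where $X_j:=F_{c_1}\cup\cdots\cup F_{c_j}$. The weakly exact Mayer--Vietoris sequence
\[
\cH_i(X_{j-1}\cap F_{c_j})\to\cH_i(X_{j-1})\oplus\cH_i(F_{c_j})\to\cH_i(X_j)\to\cH_{i-1}(X_{j-1}\cap F_{c_j})
\]
combined with the vanishing established above forces $\cH_i(X_j)\cong\cH_i(X_{j-1})$ in degrees $i>k$. The base case $\cH_i(F_{c_1})=0$ and iteration yield $\cH_i(\gO)=\cH_i(X_N)=0$ for $i>k$, which combined with the first paragraph delivers $\EFII(2k)$. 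The main obstacle I anticipate is to justify this Mayer--Vietoris argument cleanly in reduced $\ltwo$-homology, which enjoys only weak exactness, so that vanishing in adjacent terms actually forces vanishing in the middle term; a secondary technical point is verifying via Corollary \ref{c:multevens} that $X_{j-1}\cap F_{c_j}$ genuinely decomposes as a disjoint union of copies of $\gS(W',U_{st})$ when $F_{c_j}$ is even, so that the degree-$(k-1)$-only vanishing on each component passes to the full intersection.
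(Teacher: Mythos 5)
Your proof is correct and follows essentially the same route as the paper's: the same painted-collar decomposition, the same intersection results (Corollaries \ref{c:multevens} and \ref{c:oddn-1}), and the same one-collar-at-a-time Mayer--Vietoris induction with the even collars added first, using $\EFI(2k-2)$ on the even--even intersections and $\EFI(2k-1)$ on the odd ones. The only cosmetic difference is that you work with absolute homology after first killing $\cH_{\ast}(\partial\gO)$ outright (exploiting that $(2k-1)/2$ is not an integer), whereas the paper runs the relative Mayer--Vietoris sequence of the pairs $(\cf_{E'}\cup F_c,\partial_{E'}\cup\partial_{F_c})$ and uses that each collar retracts onto its piece of $\partial\gO$.
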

\begin{proof} It suffices to calculate $\cH_{\ast}(\gO,\partial\gO)$.  We first show that $\cH_{n}(\gO,\partial\gO)=0$.  Consider the long exact sequence of the pair $(\gO,\partial\gO)$:
\[\rightarrow\cH_n(\gO)\rightarrow \cH_{n}(\gO,\partial\gO)\rightarrow \cH_{n-1}(\partial\gO)\rightarrow
\]
$\gO$ is an $n$-dimensional manifold with infinite boundary, so $\cH_n(\gO)=0$ and $\cH_{n-1}(\partial\gO)=0$.  Then by exactness, $\cH_{n}(\gO,\partial\gO)=0$.

Now, let $i>\frac{n}{2}$ and let $\cf_{E'}$ denote the union of a collection of even collars or the union of all evens and a collection of odd collars.  Let $F_c$ be a collar not contained in $\cf_{E'}$ where if $\cf_{E'}$ is not all the even collars, require that $F_c$ be an even collar.  Let $\partial_{E'}=\cf_{E'}\cap\partial\gO$ and let $\partial_{F_c}=F_c\cap\partial\gO$.  Note that $\partial_{E'}\cap\partial_{F_c}=\emptyset$ and consider the relative Mayer-Vietoris sequence of the pair $(\cf_{E'}\cup F_c, \partial_{E'}\cup \partial_{F_c})$:
\[\ldots\rightarrow\cH_{i}(\cf_{E'},\partial_{E'})\oplus\cH_{i}(F_c,\partial_{F_c})\rightarrow\cH_{i}(\cf_{E'}\cup F_c,\partial_{E'}\cup\partial_{F_c})\rightarrow\cH_{i-1}(\cf_{E'}\cap F_c)\rightarrow\ldots\]
Assume that $\cH_{i-1}(\cf_{E'},\partial_{E'})=0$.  Each color retracts onto its boundary, so $\cH_{i}(F_c,\partial_{F_c})=0$.  If $F_c$ is even, then the last term vanishes by Corollary \ref{c:2evens} and $\EFI(n-2)$ and since $i-1>\frac{n-2}{2}$, if $F_c$ is odd, then the last term vanishes by \ref{c:oddn-1}, $\EFI(n-1)$ and since for $n$ even, $i>\frac{n}{2}$ implies $i-1>\frac{n-1}{2}$.  In either case, exactness implies that $\cH_{i}(\cf_{E'}\cup F_c,\partial_{E'}\cup\partial_{F_c})=0$.  It follows from induction that $\cH_{3}(\gO,\partial\gO)=0$.
\end{proof}

\begin{Rem}\label{r:whyeven} Note that if $n$ is odd, then with these hypotheses we are unable to guarantee the vanishing of the $\cH_{(n-1)/2}(\cf_{E'}\cap F_c)$ term for odd colors $F_c$.  However, if we knew the inclusion map of the intersection of the painted boundary collars into the direct sum in the Mayer-Vietoris sequence was injective, we wouldn't need both dimensional statements.  Since this ``even-flag'' argument is pretty technical, I am usure of the most general statement that can be made.
\end{Rem}

It is known that $\I(2)$ and $\I(3)$ are true and therefore the more specific statements $\EFI(2)$ and $\EFII(3)$ are true.  The purpose of \cite{schroedereven} is to prove that $\EFI(4)$ is true.  This is done in a manner exactly like that spelled out above, including the fact that $\EFTR(4)$ is true.  Thus the main result of \cite{schroedereven} is generalized by the following statement.

\begin{Theorem}\label{t:newmain} $\left[\EFI(2k-2), \EFI(2k-1) \text{and} \EFTR(2k)\right]\Longrightarrow \EFI(2k)$.
\end{Theorem}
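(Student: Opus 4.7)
The plan is to observe that Theorem \ref{t:newmain} is a formal chaining of three implications already established (or stated) earlier in the paper, so the proof reduces to assembling them in the correct order rather than to any new technical argument. All the analytic work has been localized in the auxiliary statements: Proposition \ref{p:even1ruin} (the Mayer--Vietoris / painted-collar argument), the ruin-dimension-shifting Proposition \ref{p:one-letter}, and the Poincar\'e-duality reduction in Theorem \ref{t:Singer}.

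First, I would feed the two hypotheses $\EFI(2k-2)$ and $\EFI(2k-1)$ into Proposition \ref{p:even1ruin}, whose conclusion is $\EFII(2k)$. This is the only nontrivial step, in the sense that it is the step that actually invokes the coloring/even--odd collar decomposition developed in Section \ref{ss:evenodd}: one covers $\gO(S,t)$ by even and odd $c$-collars, and the inductive Mayer--Vietoris computation uses $\EFI(2k-2)$ to kill the intersection terms for pairs of even collars (via Corollary \ref{c:2evens}, where the intersections are $(n-2)$-manifold Davis complexes) and $\EFI(2k-1)$ to kill the intersection terms for odd collars against the union of evens (via Corollary \ref{c:oddn-1}, where the intersections are $(n-1)$-manifold Davis complexes). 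This step is the genuine input from the hypotheses and is already carried out in Proposition \ref{p:even1ruin}, so in the present proof I simply cite it.

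Next, I would combine this newly obtained $\EFII(2k)$ with the third hypothesis $\EFTR(2k)$ and apply the implication \ref{sss:evenone-letter} (the even-flag analogue of Proposition \ref{p:one-letter}, whose proof goes through unchanged because it only uses the excision triple (\ref{e:ruinsequence}) and induction on $\Card(S-V)$). This yields $\EFV(2k)$, i.e.\ the vanishing of the relative $\ell^2$-homology above the middle dimension for arbitrary one-letter ruins $\gO(V,t)$ rather than only $\gO(S,t)$.

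Finally, I would invoke \ref{sss:evenSinger}, which states that $\EFV(2k)\Longrightarrow\EFI(2k)$; its proof is a telescoping use of the long exact sequence (\ref{e:ruinsequence}) with $T=\{t\}$ to reduce $\cH_i(\gS)$ above the middle dimension to $\cH_i(\gS(\emptyset))=0$, followed by Poincar\'e duality in the manifold $\gS$. This completes the chain $\EFI(2k-2)+\EFI(2k-1)\to\EFII(2k)$, then $\EFII(2k)+\EFTR(2k)\to\EFV(2k)$, then $\EFV(2k)\to\EFI(2k)$. The only potential obstacle is purely bookkeeping: verifying that the proofs of Proposition \ref{p:one-letter} and Theorem \ref{t:Singer} really do pass through the ``even, flag'' restrictions unchanged (which they do, since each of those arguments uses only the excision isomorphisms (\ref{e:excision1})--(\ref{e:excision2}) and exactness, neither of which cares whether $L$ is flag or $(W,S)$ is even).
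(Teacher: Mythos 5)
Your proposal is correct and follows exactly the paper's own argument: apply Proposition \ref{p:even1ruin} to get $\EFII(2k)$ from $\EFI(2k-2)$ and $\EFI(2k-1)$, then combine with $\EFTR(2k)$ via \ref{sss:evenone-letter} to get $\EFV(2k)$, and finish with \ref{sss:evenSinger}. Your added remarks on why the proofs of Proposition \ref{p:one-letter} and Theorem \ref{t:Singer} survive the even/flag restrictions match the paper's own justification for \ref{sss:evenone-letter} and \ref{sss:evenSinger}.
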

\begin{proof} By Proposition \ref{p:even1ruin}, the first two hypotheses give us that know that $\EFII(2k)$ is true.  Then, along with $\EFTR(2k)$, this implies that $\EFV(2k)$ is true (see \ref{sss:evenone-letter}).  Finally, by \ref{sss:evenSinger}, we can conclude that $\EFI(2k)$ is true.
\end{proof}

\bibliographystyle{abbrv}

\bibliography{mybib}

\end{document}